\newtheorem{same}{This should never appear}[section]
\newtheorem{defin}[same]{Definition}
\newtheorem{remark}[same]{Remark}
\newtheorem{theorem}[same]{Theorem}
\newtheorem{example}[same]{Example}
\newtheorem{lemma}[same]{Lemma}
\newtheorem{fact}[same]{Fact}
\newtheorem{question}[same]{Question}
\newtheorem{prop}[same]{Proposition}
\newtheorem{conj}[same]{Conjecture}
\newtheorem{exam}[same]{Example}
\newcommand{\fct}[2]{{}^{#1}#2}
\newbox\noforkbox \newdimen\forklinewidth
\noforkbox\hbox{\lower 2pt\box1\lower 2pt\box0\relax}
\def\unionstick{\mathop{\copy\noforkbox}\limits}
\def\nonfork_#1{\unionstick_{\textstyle #1}}
\newbox\doesforkbox
\doesforkbox\hbox{\lower 2pt\box1 \lower 2pt\box2\lower2pt\box0\relax}
\def\nunionstick{\mathop{\copy\doesforkbox}\limits}
\def\fork_#1{\nunionstick_{\textstyle #1}}
\newcommand\Set{\operatorname{\bf Set}}
\newcommand\Met{\operatorname{\bf Met}}
\newcommand\Emb{\operatorname{\bf Emb}}
\newcommand\Lin{\operatorname{\bf Lin}}
\newcommand{\ba}{\mathbf{a}}
\newcommand{\bb}{\mathbf{b}}
\newcommand\ck{\mathcal{K}}
\newcommand{\bm}{\mathbf{m}}
\newcommand{\bx}{\mathbf{x}}
\newcommand{\by}{\mathbf{y}}
\newcommand{\ran}{\textrm{ran}}
\newcommand{\cf}{\text{cf }}
\newcommand{\rest}{\upharpoonright}
\newcommand{\lead}{\le}
\newcommand\leap[1]{\lead_{#1}}
\newcommand\lea{\leap{\K}}
\def\lee{\preceq}
\newcommand\ca{\mathcal {A}}
\newcommand\cc{\mathcal {C}}
\newcommand\cd{\mathcal {D}}
\newcommand\cl{\mathcal {L}}
\newcommand\Hom{\operatorname{Hom}}
\newcommand\Homk{\operatorname{Hom}_\ck}
\newcommand{\bigM}{\widehat{M}}
\newcommand{\bigN}{\widehat{N}}
\newcommand{\bigL}{\widehat{L}}
\newcommand{\bigK}{\widehat{\K}}
\newcommand{\K}{\operatorname{\mathcal{K}}}
\newcommand{\lan}{\operatorname{L}}
\newcommand{\LS}{\operatorname{LS}}
\newcommand{\Mod}{\operatorname{Mod}}
\newcommand{\EC}{\operatorname{EC}}
\newcommand{\PC}{\pc}
\def\l{\langle}
\def\r{\rangle}
\newcommand{\pc}{\operatorname{PC}}
\newcommand\slesseq{\trianglelefteq}
\newcommand\ksst{\lea}
\newcommand{\seq}[1]{\langle #1 \rangle}
\newcommand{\te}[1]{\textrm{#1}}
\newcommand\Mor{\operatorname{Mor}}
\newcommand{\mcard}[1]{\text{card}(#1)}
\newcommand{\Mcard}[1]{\text{card}(#1)}
\newcommand{\scard}[1]{\text{card}\left(#1\right)}
\newcommand{\gS}{\text{gS}}
\title[$\mu$-Abstract elementary classes]{$\mu$-Abstract elementary classes and other generalizations}
\date{\today\\
AMS 2010 Subject Classification: Primary 03C48. Secondary: 03C45, 03C52, 03C55, 03C75, 03E55, 18C35}
\keywords{Abstract elementary classes; Accessible categories; $\mu$-Abstract elementary classes; Classification Theory; Tameness; Categoricity}
\begin{document}

%% No indentation at the start of each paragraph
\parindent 0pt
%% Gap between paragraphs
\parskip 5pt

%% Include only the sections and not the subsections in the table of content
%\setcounter{tocdepth}[1]

\author[Boney]{Will Boney}
\email{wboney@math.harvard.edu}
\urladdr{http://math.harvard.edu/\textasciitilde wboney/}
\address{Mathematics Department, Harvard University, Cambridge, Massachusetts, USA}
\thanks{This material is based upon work done while the first author was supported by the National Science Foundation under Grant No. DMS-1402191.}

\author[Grossberg]{Rami Grossberg}
\email{rami@cmu.edu}
\urladdr{http://math.cmu.edu/\textasciitilde rami/}
\address{Department of Mathematical Sciences, Carnegie Mellon University, Pittsburgh, Pennsylvania, USA}

\author[Lieberman]{Michael Lieberman}
\email{lieberman@math.muni.cz}
\urladdr{http://www.math.muni.cz/\textasciitilde lieberman/}
\address{Department of Mathematics and Statistics, Faculty of Science, Masaryk University, Brno, Czech Republic}

\author[Rosick\'y]{Ji\v r\'i Rosick\'y}
\email{rosicky@math.muni.cz}
\urladdr{http://www.math.muni.cz/\textasciitilde rosicky/}
\address{Department of Mathematics and Statistics, Faculty of Science, Masaryk University, Brno, Czech Republic}
\thanks{The third and fourth authors are supported by the Grant Agency of the Czech Republic under the grant P201/12/G028.}

\author[Vasey]{Sebastien Vasey}
\email{sebv@cmu.edu}
\urladdr{http://math.cmu.edu/\textasciitilde svasey/}
\address{Department of Mathematical Sciences, Carnegie Mellon University, Pittsburgh, Pennsylvania, USA}
\thanks{This material is based upon work done while the fifth author was supported by the Swiss National Science Foundation under Grant No.\ 155136.}

%%%%%%%%%%%%%%%%%%%%%%%%%%%%%%%%%%%%%%%%%%%%%%%
\begin{abstract}
We introduce $\mu$-Abstract Elementary Classes ($\mu$-AECs) as a broad framework for model theory that includes complete boolean algebras and metric spaces, and begin to develop their classification theory.  Moreover, we note that $\mu$-AECs correspond precisely to accessible categories in which all morphisms are monomorphisms, and begin the process of reconciling these divergent perspectives: for example, the preliminary classification-theoretic results for $\mu$-AECs transfer directly to accessible categories with monomorphisms.

%The notion of abstract elementary class (AEC) is a semantic generalization of  classes axiomatized by $L_{\lambda,\omega}$, by Shelah's presentation theorem they are projective classes of $L_{\lambda,\omega}$ with perhaps a larger similarity type than that of the original AEC.

%One of the limitations of abstract elementary classes (AECs) is that they are not appropriate to deal even with logics like $L_{\omega_1,\omega_1}$ not even with languages that have function symbols with infinite arity, as with $\sigma$-complete Boolean algebras or Dirichlet series.  We introduce here a generalization of the notion of abstract elementary class that allows dealing with function symbols with infinite arity $\mu$ and verify the corresponding representation theorems. 

%This allows us to use set-theoretic definability and develop analogous results to Ehrenfeucht-Mostowski.

%We establish the following syntax-free result.   If $\lambda$ is a strong limit of cofinality $\omega$ and $\K$ is a $\mu$-CAEC with $\mu+\LS(\K)<\lambda$ then
%\[
%I(\lambda,\K)=I(\lambda^+,\K)=1\implies I(\lambda^{++},\K)\geq 1.
%\]

%A possible 3rd application:  Failure of order-property implies many models /?
\end{abstract}
%%%%%%%%%%%%%%%%%%%%%%%%%%%%%%%%%%%%%%%%%%%%%%%

\maketitle

\section{Introduction}
%\footnotei{SV: What is the status of ``failure of order-property implies many models?''. Doesn't this require EM models?}

In this paper, we offer a broad framework for model theory, \emph{$\mu$-abstract elementary classes}, and connect them with existing frameworks, namely abstract elementary classes and, from the realm of categorical model theory, accessible categories (see \cite{makkaiparebook}, \cite{adamekrosicky}) and $\mu$-concrete abstract elementary classes (see \cite{LRmetric}).

All of the above frameworks have developed in response to the need to analyze the model theory of nonelementary classes of mathematical structures; that is, classes in which either the structures themselves or the relevant embeddings between them cannot be adequately described in (finitary) first order logic.  This project was well underway by the 50's and 60's, which saw fruitful investigations into infinitary logics and into logics with additional quantifiers (see \cite{Dic} and \cite{BaFe} for summaries).  Indeed, Shelah \cite[p.~41]{sh702} recounts that Keisler and Morley advised him in 1969 that this direction was the future of model theory and that first-order had been mostly explored.  The subsequent explosion in stability theory and its applications suggest otherwise, naturally, but the nonelementary context has nonetheless developed into an essential complement to the more classical picture.

On the model-theoretic side, Shelah was the leading figure, publishing work on excellent classes (\cite{Sh87a} and \cite{Sh87b}) and classes with expanded quantifiers \cite{l(aa)intro}, and, of greatest interest here, shifting to a formula-free context through the introduction of \emph{abstract elementary classes} (or \emph{AECs}) in \cite{Sh88}.  The latter are a purely semantic axiomatic framework for abstract model theory that encompasses first order logic as well as infinitary logics incorporating additional quantifiers and infinite conjuncts and disjuncts, not to mention certain algebraically natural examples without an obvious syntactic presentation---see \cite{nperp}.  It is important to note, though, that AECs still lack the generality to encompass the logic $L_{\omega_1, \omega_1}$ or complete metric structures.

Even these examples are captured by \emph{accessible categories}, a parallel, but significantly more general, notion developed simultaneously among category theorists, first appearing in \cite{makkaiparebook} and receiving comprehensive treatments both in \cite{makkaiparebook} and \cite{adamekrosicky}.  An accessible category is, very roughly speaking, an abstract category (hence, in particular, not a category of structures in a fixed signature) that is closed under sufficiently directed colimits, and satisfies a kind of weak Löwenheim-Skolem property: any object in the category can be obtained as a highly directed colimit of objects of small size, the latter notion being purely diagrammatic and internal to the category in question.  In particular, an accessible category may not be closed under arbitrary directed colimits, although these are almost indispensable in model-theoretic constructions: the additional assumption of closure under directed colimits was first made in \cite{rosickyjsl}---that paper also experimented with the weaker assumption of directed bounds, an idea that recurs in Section~\ref{classthysect} below.  

Subsequent work (see \cite{bekerosicky}, \cite{liebapal}, and \cite{LRclass}) has resulted in a precise characterization of AECs as concrete accessible categories with added structure, namely as pairs $(\ck,U)$, where
\begin{itemize}\item $\ck$ is an accessible category with all morphisms monomorphisms and all directed colimits, and\\
\item $U:\ck\to\Set$ (with $\Set$ the category of sets and functions) is a faithful (``underlying set'') functor satisfying certain additional axioms.\end{itemize}
Details can be found in Section 3 of \cite{LRclass}.  Of particular importance is the extent to which $U$ preserves directed colimits; that is, the extent to which directed colimits are concrete.  If we assume that $U$ preserves arbitrary directed colimits, we obtain a category equivalent to an AEC.  If we make the weaker assumption that $U$ merely preserves colimits of $\mu$-directed, rather than directed, diagrams, we arrive at the notion of a $\mu$-concrete AEC (see \cite{LRmetric}).  Note that, although directed colimits may not be preserved by $U$ (that is, they may not be ``$\Set$-like''), they still exist in the category $\ck$---metric AECs, whose structures are built over complete metric structures, are a crucial example of this phenomenon.  One might ask, though, what would happen if we weaken this still further: what can we say if we drop the assumption that $\ck$ is closed under directed colimits, and merely assume that the colimits that exist in $\ck$ and are ``$\Set$-like'' are those that are $\mu$-directed for some $\mu$?
Here we introduce a framework, called \emph{$\mu$-abstract elementary classes}, that represents a model-theoretic approximation of that generalized notion, and which, most importantly, encompasses all of the examples considered in this introduction, including classes of models in infinitary logics $L_{\kappa, \mu}$, AECs, and $\mu$-concrete AECs.  This is not done just for the sake of generalization but in order to be able to deal with specific classes of structures that allow functions with infinite arity (like $\sigma$-complete Boolean algebras or formal power series).  Moreover, such classes also occur naturally in the development of the classification theory for AECs, as can be seen by their use in \cite{indep-aec-v5} and \cite{bv-sat-v3} (there the class studied is the $\mu$-AEC of $\mu$-saturated models of an AEC).

We define $\mu$-AECs in Section \ref{prelimsection}.  We then show that the examples discussed above fit into this framework.  We establish an analog of  Shelah's Presentation Theorem for $\mu$-AECs in Section \ref{present-sec}.  In Section \ref{access-sec} we show that $\mu$-AECs are, in fact, extraordinarily general: up to equivalence of categories, the $\mu$-AECs are precisely the accessible categories whose morphisms are monomorphisms.  Although this presents certain obstacles---it follows immediately that a general $\mu$-AEC will not admit Ehrenfeucht-Mostowski constructions---there is a great deal that can be done on the $\mu$-AEC side of this equivalence. In section \ref{tamenesssect}, we show assuming the existence of large cardinals that $\mu$-AECs satisfy tameness, an important locality property in the study of AECs. In Section \ref{classthysect}, with the additional assumption of directed bounds, we begin to develop the classification theory of $\mu$-AECs. Note that the results of Sections \ref{tamenesssect} and \ref{classthysect} transfer immediately to accessible categories with monomorphisms.   

This paper was written while the fifth author was working on a Ph.D.\ thesis under the direction of the second author at Carnegie Mellon University and he would like to thank Professor Grossberg for his guidance and assistance in his research in general and in this work specifically. We also thank the referee for questions that helped us clarify some aspects of this paper.

%% Continuing this work, we show in the proof of Theorem \ref{existence in lambda++} evidence that Galois types and the \emph{extension property} are useful also for $\mu$-AECs. In fact, we are able to generalize \cite[Theorem 3.7]{Sh88} to $\mu$-CAECs (a slightly less general framework that still encompasses complete metric spaces and $\sigma$-complete Boolean algebras\footnotei{SV: to check and write more explicitly...}):

%% \begin{theorem}
%%   Let $\K$ be a $\mu$-CAEC. Let $\lambda$ be a strong limit cardinal of cofinality $\aleph_0$ such that $\mu + \LS (\K) < \lambda$. If $\K$ is categorical in $\lambda$ and $\lambda^+$, then $\K$ has a model in $\lambda^{++}$.
%% \end{theorem}

\section{Preliminaries} \label{prelimsection}

We now introduce the notion of a \emph{$\mu$-abstract elementary class}, or $\mu$-AEC.  As with ordinary AECs, we give a semantic/axiomatic definition for a class of structures and a notion of strong substructure.

\begin{defin}\label{defmac}
Fix an infinite cardinal $\mu$.

	A \emph{$\mu$-ary language $L$} consists of a set of function symbols $\seq{F_i : i \in I_F}$ and relations $\seq{R_j : j \in J_R}$ (here, $I_F$, $J_R$ are index sets) so that each symbol has an arity, denoted $n(F_i)$ or $n(R_j)$, where $n$ is an ordinal valued function $n:\{R_j, F_i\mid i\in I_F, j\in J_R\}\rightarrow \mu$.
		
	Given a $\mu$-ary language $L$, an $L$-structure $M$ is $\seq{|M|, F^M_i, R^M_j}_{i \in L_F, j \in L_R}$ where
		 $|M|$ is a set, called the universe of $M$;
		 $F^M_i : {}^{n(F_i)}|M| \to |M|$ is a function of arity $n(F_i)$; and
		 $R^M_j \subset {}^{n(R_j)} |M|$ is a relation of arity $n(R_j)$.

	 We say that
	 $\l\K,\lea \r$ is a \emph{$\mu$-abstract class} provided
	 \begin{enumerate}
         \item $\K$ is a class of $L$-structure for a fixed $\mu$-ary language $L := \lan (\K)$.
	 \item
	 $\l\K,\lea\r$ is a partially pre-ordered class (that is, $\lea$ is reflexive and transitive) such that $M \lea N$ implies that $M$ is an $L$-submodel of $N$.
	 \item  $\l \K,\lea\r$ respects $L$-isomorphisms; that is, if $f: N \to N'$ is an $L$-isomorphism and $N \in \K$, then $N' \in \K$ and if we also have $M \in \K$ with $M \lea N$, then $f(M) \in \K$ and $f(M) \lea N'$;
	 \end{enumerate}
	
         We often do not make the distinction between $\K$ and $(\K, \lea)$.
	\end{defin}

An $L$-homomorphism is called a substructure embedding if it is injective and reflects all relations. Both inclusions of a substructure
and isomorphisms are substructure embeddings. Conversely, if $h:M\to N$ is a substructure embedding then $M$ is isomorphic to the substructure
$h(M)$ of $N$. The category of all $L$-structures and substructure embeddings is denoted by $\Emb(L)$. Then an abstract class is the same
as a subcategory $\K$ of $\Emb(L)$ which is 
\begin{enumerate}
\item
\textit{Replete}, i.e., closed under isomorphic objects.
\item \textit{Iso-full}, i.e., containing isomorphisms between $\K$-objects.
\end{enumerate}
	
Let $\l I,\leq\r$ be a partially ordered set.  We say that $I$ is \emph{$\mu$-directed}, where $\mu$ is a regular cardinal, provided that for every $J\subseteq I$ with $\scard{J}<\mu$, there exists $r\in I$ such that $r\geq s$ for all $s\in J$. Thus $\aleph_0$-directed is the usual notion of directed set. Let $\l\K,\lea\r$ be an abstract class.  A family  
$\{M_i\mid i\in I\}\subseteq\K$ is called \emph{a $\mu$-directed system} provided $I$ is a $\mu$-directed set and $i<j$ implies $M_i\lea M_j$. This
is the same as a $\mu$-directed diagram in $\K$.
	
	\begin{defin}\label{defmaec}  Suppose $\l\K,\lea\r$ is a $\mu$-abstract class, with $\mu$ a regular cardinal. We say that $\l\K, \lea \r$ is a $\mu$-\emph{abstract elementary class} if the following properties hold:
	
\begin{enumerate}

%    \item There is some $\mu$-ary language $L = \lan(\K)$ so that every element of $\K$ is an $L$-structure; 
%
%    \item $\prec_K$ is a partial order on $\K$;
%
%    \item for every $M, N \in\K$, if $M \prec_K N$, then $M \subseteq_L N$;

    \item \emph{(Coherence)} if $M_0, M_1, M_2 \in\K$ with $M_0  \lea  M_2$, $M_1  \lea  M_2$, and $M_0 \subseteq M_1$, then $M_0  \lea  M_1$;

    \item \emph{(Tarski-Vaught chain axioms)} If $\{M_i \in \K : i \in I\}$ is a $\mu$-directed system, then:
    %\footnote{That is, $I$ is a $\mu$-directed partial order (every subset of $I$ of size less than $\mu$ has an upper bound in $I$) and $i \le j$ implies $M_i  \lea  M_j$.}, then

        \begin{enumerate}

            \item $\bigcup_{i \in I} M_i \in \K$ and, for all $j \in I$, we have $M_j  \lea  \bigcup_{i \in I} M_i$; and

            \item if there is some $N \in \K$ so that, for all $i \in I$, we have $M_i  \lea  N$, then we also have $\bigcup_{i \in I} M_i  \lea  N$.

        \end{enumerate}

    \item \emph{(L\"owenheim-Skolem-Tarski number axiom)} There exists a cardinal $\lambda = \lambda^{<\mu} \geq \scard{\lan(\K)} + \mu$ such that for any $M \in\K$ 
    and $A \subseteq |M|$, there is some $N \lea  M$ such that $A \subseteq |N|$ and $\mcard{N} \leq \scard{A}^{<\mu} + \lambda$. $\LS(\K)$ is the minimal cardinal
    $\lambda$ with this property.\footnote{Note that $\LS (\K)$ really depends on $\mu$ but $\mu$ will always be clear from context.}

\end{enumerate}

\end{defin}

Note that this definition mimics the definition of an AEC.  We highlight the key differences:
\begin{remark}\label{rmk-mu-aec} \
  \begin{enumerate}
	\item Functions and relations are permitted to have infinite arity.
	\item The L\"owenheim-Skolem-Tarski axiom only guarantees the existence of submodels of certain cardinalities, subject to favorable cardinal arithmetic.
	\item Closure under unions of $\lea$-increasing chains does not hold unconditionally: the directed systems must in fact be $\mu$-directed.
        \item The Tarski-Vaught axioms describe $\mu$-directed systems rather than chains and say that $\ck$ is closed under $\mu$-directed colimits
        in $\Emb(L)$. One could have only required that every chain of models indexed by an ordinal of cofinality at least $\mu$ has a least upper bound. When 
        $\mu = \aleph_0$, this is well-known to give an equivalent definition (see e.g. \cite{adamekrosicky} 1.7, though the central idea of the proof dates back to Iwamura's Lemma, \cite{iwamura}). In general, though, this is significantly weaker (see \cite{adamekrosicky} Exercise 1.c).  Concretely, proving the presentation theorem becomes problematic if one opts instead for the chain definition.
        \item\label{rmk-lambda-ary} Replacing the Tarski-Vaught axioms by $\K$ being closed under $\mu$-directed colimits in $\Emb(L)$ makes sense also when $\K$ is a $\lambda$-abstract class for $\lambda>\mu$.
        \item As a notational remark, we use $\scard{}$ to denote the size of sets and (universes) of models.  This breaks with convention, but is to avoid $|\cdot|$ being used to denote both universe and cardinality, leading to the notation $\|M\|$ for the cardinality of the universe of a model.
\end{enumerate}
\end{remark}

As promised in the introduction, $\mu$-AECs subsume many previously studied model theoretic frameworks:
\begin{enumerate}
  
	\item All AECs are $\aleph_0$-AECs with the same L\"{o}wenheim-Skolem number.  This follows directly from the definition.  See \cite{grossberg2002} for examples of classes of structures that are AECs; in particular, AECs subsume classical first-order model theory.
	
	\item Given an AEC $\K$ with amalgamation (such as models of a first order theory), and a regular cardinal $\mu > \LS (\K)$, the class of $\mu$-saturated\footnote{In the sense of Galois types.} models of $\K$ is a $\mu$-AEC with Löwenheim-Skolem number $\LS (\K)^{<\mu}$.  If $\K$ is also tame and stable (or superstable), then the results of Boney and Vasey \cite{bv-sat-v3} show that this is true even for certain cardinals below the saturation cardinal $\mu$.
	
	\item \label{inflog-ex} Let $\lambda \geq \mu$ be cardinals with $\mu$ regular. Let $L_A$ be a fragment of $L_{\lambda, \mu}$ (recall that a fragment is a collection of formulas closed under sub formulas and first order connectives), and let $T$ be a theory in that fragment.  Then $\K= (\Mod \te{ } T, \preceq_{L_A})$ is a $\mu$-AEC with $\LS(\K) = (\scard{L_A} + \scard{T})^{<\mu}$, where $M \preceq_{L_A} N$ if and only if for all $\phi(\bx) \in L_A$ and $\bm \in |M|$ of matching arity (which might be infinite), we have that $M \models \phi[\ba]$ if and only if $N \models \phi[\ba]$.
	
	\item Complete metric spaces form an $\aleph_1$-AECs.  This follows from the above item because metric spaces are axiomatizable in first order and completeness is axiomatized by the $L_{\omega_1, \omega_1}$ sentence
	$$\forall \seq{x_n : n < \omega} [ (\wedge_{\epsilon \in \mathbb{Q}^+} \vee_{N < \omega} \wedge_{N <n < m < \omega} d(x_n, x_m) < \epsilon) \implies \exists y (\wedge_{\epsilon \in \mathbb{Q}^+} \vee_{N < \omega} \wedge_{N <n  < \omega} d(x_n, y) < \epsilon)]$$
	Although this does not capture the $[0,1]$-value nature of many treatments of the model theory of metric structures, such as \cite{BBHU}, this can be incorporated in one of two ways.  One could add the real numbers as a second sort, interpret relations as functions between the sorts, and axiomatize all of the continuity properties.  A less direct approach is taken in \cite{continf}, where a complete structure is approximated by a dense subset describable in $L_{\omega_1, \omega}$.

\item Along the lines of complete metric spaces, $\mu$-complete boolean algebras are $\mu$-AECs because $\mu$-completeness can be written as a $L_{\mu, \mu}$-sentence.

\item \label{caec-ex} Any $\mu$-concrete AEC (or $\mu$-CAEC), in the sense of \cite{LRmetric}, is a $\mu$-AEC.
      \item Any $\mu$-ary functorial expansion of a $\mu$-AEC is naturally a $\mu$-AEC. See Section 2.1 immediately below.
      
      \item Generalizing $L(Q)$, consider classes axiomatized by $L_{\lambda, \mu}(Q^\chi)$, where $Q^\chi$ is the quantifier ``there exist at least $\chi$'' (the standard $L(Q)$ is $L_{\omega, \omega}(Q^{\aleph_1})$ in this notation).  As in (\ref{inflog-ex}), let $T$ be a theory in $L_{\lambda, \mu}(Q^\chi)$ and $L_A$ be a fragment of this logic containing $T$.  Since $Q^\chi$ is $L_{\chi, \chi}$ expressible, we already have $\K_0 := (\Mod T, \leap{L_A})$ is a $(\mu+\chi)$-AEC with $\LS(\K_0) = (\scard{L_A}+\chi)^{<(\mu + \chi)}$.  For a stronger result, if we set $M \leap{L_A}^* N$ by
      \begin{center} 
      $M \leap{L_A} N$ and if $Q^\chi x \phi(x, \by) \in L_A$ with $M\vDash \neg Q^\chi x \phi(x, \bm)$, then $\phi(M, \ba) = \phi(N, \ba)$
      \end{center}
      then $\K_1 := (\Mod T, \leap{L_A}^*)$ is a $\mu$-AEC with $\LS(\K_1) = (\scard{L_A}+\chi)^{<\mu}$.  Moreover, if $\chi = \chi_0^+$ and $L_A$ only contains \emph{negative} instances of $Q^\chi$, then $\LS(\K_1) = (\scard{L_A} + \chi_0)^{<\mu}$.

\end{enumerate}

We now briefly discuss the interplay between certain $\mu$-AECs and functorial expansions.

\subsection{Functorial expansions and infinite summation}

Recall from \cite[Definition 3.1]{sv-infinitary-stability-v5}:

\begin{defin}
  Let $\K$ be a $\mu$-AEC with $L = \lan(\K)$ and let $\bigL$ be a $\lambda$-ary expansion of $L$ with $\lambda \ge \mu$. A \emph{$\lambda$-ary $\bigL$-functorial expansion of $\K$} is a class $\bigK$ of $\bigL$-structures satisfying:
        \begin{enumerate}
          \item The map $\bigM \mapsto \bigM \rest L$ is a bijection from $\bigK$ onto $\K$. For $M \in\K$, we write $\bigM$ for the unique element of $\bigK$ whose reduct is $M$.
          \item Invariance: If $f: M \cong N$, then $f: \bigM \cong \bigN$.
          \item Monotonicity: If $M \lea N$, then $\bigM \subseteq \bigN$.
        \end{enumerate}
        
        We order $\bigK$ by $\bigM \leap{\bigK} \bigN$ if and only if $M \lea N$.
\end{defin}

\begin{fact}[Proposition 3.8.(4) in \cite{sv-infinitary-stability-v5}]
  Let $\K$ be a $\mu$-AEC and let $\bigK$ be a $\mu$-ary functorial expansion of $\K$. Then $(\bigK, \leap{\bigK})$ is a $\mu$-AEC with $\LS (\bigK) = \LS (\K)$.
\end{fact}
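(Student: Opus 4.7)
The plan is to verify each clause of Definition \ref{defmaec} for $(\bigK, \leap{\bigK})$, leveraging the three properties (bijectivity of the reduct, invariance, monotonicity) that define a functorial expansion, together with the corresponding property for $\K$. First I would check that $(\bigK, \leap{\bigK})$ is a $\mu$-abstract class in $\bigL$: reflexivity and transitivity of $\leap{\bigK}$ come for free from those of $\lea$ via the bijection $\bigM \mapsto M$; that $\bigM \leap{\bigK} \bigN$ implies $\bigM$ is an $\bigL$-substructure of $\bigN$ is exactly monotonicity; and closure of $\bigK$ under $\bigL$-isomorphism reduces, via invariance and bijectivity, to the same property for $\K$.

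Next I would handle coherence. Suppose $\bigM_0 \leap{\bigK} \bigM_2$, $\bigM_1 \leap{\bigK} \bigM_2$, and $\bigM_0 \subseteq \bigM_1$ as $\bigL$-structures. Taking reducts, $M_0 \lea M_2$, $M_1 \lea M_2$, and $M_0 \subseteq M_1$ as $L$-structures, so coherence for $\K$ gives $M_0 \lea M_1$, whence $\bigM_0 \leap{\bigK} \bigM_1$ by definition of $\leap{\bigK}$.

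The main obstacle, and the step I expect to require the most care, is the Tarski–Vaught axiom. Given a $\mu$-directed system $\{\bigM_i : i \in I\}$ in $\bigK$, the reducts $\{M_i\}$ form a $\mu$-directed system in $\K$, so $M := \bigcup_i M_i \in \K$ and $M_j \lea M$ for each $j$. Let $\bigM$ be the unique element of $\bigK$ lifting $M$. Monotonicity gives $\bigM_j \subseteq \bigM$ as $\bigL$-structures, so $|\bigM| = \bigcup_i |\bigM_i|$ and the $\bigL$-structures $\bigM_i$ sit coherently inside $\bigM$. The crucial point is that, because every symbol of $\bigL$ has arity $<\mu$ (this is where $\mu$-ariness of the expansion is used) and the system is $\mu$-directed, any tuple $\ba$ from $|\bigM|$ of length $n(F) < \mu$ already lies in some $|\bigM_i|$; hence the $\bigL$-operations on $\bigM$ are entirely determined by those of the $\bigM_i$. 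This identifies $\bigM$ as the $\bigL$-union of the $\bigM_i$, and yields $\bigM_j \leap{\bigK} \bigM$ as required. For the smoothness clause, if $\bigM_i \leap{\bigK} \bigN$ for all $i$, then $M_i \lea N$ for all $i$, so $M \lea N$ by smoothness in $\K$, and therefore $\bigM \leap{\bigK} \bigN$.

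Finally, for the Löwenheim–Skolem–Tarski axiom, I would set $\LS(\bigK) := \LS(\K)$ and prove both inequalities by the same bijection argument. Given $\bigM \in \bigK$ and $A \subseteq |\bigM| = |M|$, apply LST in $\K$ to obtain $N \lea M$ with $A \subseteq |N|$ and $\scard{N} \leq \scard{A}^{<\mu} + \LS(\K)$; then $\bigN \leap{\bigK} \bigM$ witnesses the bound for $\bigK$. Conversely, a witness $\bigN \leap{\bigK} \bigM$ reduces to a witness $N \lea M$ of the same cardinality, giving $\LS(\K) \leq \LS(\bigK)$. The cardinal arithmetic condition $\LS(\K) = \LS(\K)^{<\mu}$ transfers verbatim.
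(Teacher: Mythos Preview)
The paper does not prove this statement: it is cited as a Fact from \cite{sv-infinitary-stability-v5} and no argument is given. Your axiom-by-axiom verification is the natural one and is essentially correct.

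One point in your Tarski--Vaught step deserves to be made explicit. When you write that ``the $\bigL$-operations on $\bigM$ are entirely determined by those of the $\bigM_i$,'' you are using that $\bigM_i \subseteq \bigM$ means $\bigM_i$ is a \emph{full} $\bigL$-substructure, i.e.\ relations are reflected as well as preserved: $R^{\bigM_i} = R^{\bigM} \cap {}^{n(R)}|M_i|$, not merely $R^{\bigM_i} \subseteq R^{\bigM}$. This is indeed the paper's notion of substructure (see the discussion of substructure embeddings immediately after Definition~\ref{defmac}), and with it your identification of $\bigM$ with the $\bigL$-union $\bigcup_i \bigM_i$ goes through: for each relation symbol $R$ of arity $\alpha<\mu$, $\bigcup_i R^{\bigM_i} = \bigcup_i (R^{\bigM}\cap {}^{\alpha}|M_i|) = R^{\bigM}$, using $\mu$-directedness to cover every $\alpha$-tuple. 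Your phrasing mentions only $n(F)$, i.e.\ function symbols; the relation case is where the subtlety lies, and it is worth saying so, especially in light of the Remark immediately following the Fact in the paper.

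One minor gap: the L\"owenheim--Skolem--Tarski axiom in Definition~\ref{defmaec}(3) requires the witnessing cardinal to satisfy $\lambda \geq \scard{\lan(\bigK)} + \mu$, but nothing in the definition of a $\mu$-ary functorial expansion bounds $\scard{\bigL}$ by $\LS(\K)$. So the literal equality $\LS(\bigK) = \LS(\K)$ needs the (harmless, and satisfied in all intended examples) hypothesis $\scard{\bigL} \leq \LS(\K)$; you should note this, or else conclude only $\LS(\bigK) = \LS(\K) + \scard{\bigL}$.
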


\begin{remark}
  A word of warning: if $\K$ is an AEC and $\bigK$ is a functorial expansion of $\K$, then $\K$ and $\bigK$ are isomorphic (as categories). In particular, any directed system in $\bigK$ has a colimit. However, $\bigK$ may \emph{not} be an AEC if $\lan (\bigK)$ is not finitary: the colimit of a directed system in $\bigK$ may \emph{not} be the union: relations may need to contain more elements. However, if we change the definition of AEC to allow languages of infinite arity (see Remark \ref{rmk-mu-aec}.(\ref{rmk-lambda-ary})), then $\bigK$ will be an AEC in that new sense, i.e.\ an ``infinitary'' AEC.
\end{remark}

\begin{remark}
Let $\K$ be a $\mu$-AEC and consider a $\bigL$-functorial expansion $\bigK$ of $\K$. Then any function and relation symbols from 
$\bigL$ are \textit{interpretable} in $\K$ in the sense of \cite{concr} (this idea goes back to \cite{law}). This means that function symbols of arity $\alpha$ are natural transformations $\varphi:U^\alpha\to U$ where $U:\K\to\Set$ is the forgetful functor (given as the domain restriction of the forgetful functor $\Emb(L)\to\Set$ assigning underlying sets to $L$-structures) and $U^\alpha$ is the functor $\Set(\alpha,U(-)):\K\to\Set$. Similarly, relation symbols of arity $\alpha$ are subfunctors $R$ of $U^\alpha$. 

If $\bigL$ is $\mu$-ary then subfunctors $R$ preserve $\mu$-directed colimits. Since $\K$ is an $\LS(\K)^+$-accessible category (see \ref{kaecsacc}), both
$\varphi$ and $R$ are determined by their restrictions to the full subcategory $\K_{\LS(\K)^+}$ of $\K$ consisting of $\LS(\K)^+$-presentable objects.
Since there is only a set of such objects, there is a largest $\mu$-ary functorial expansion where $\bigL$ consists of all symbols for natural transformations and subfunctors as above. For 
$\mu=\aleph_0$, this is contained in \cite{LRclass} Remark 3.5.
\end{remark}

The main example in \cite{sv-infinitary-stability-v5} is Galois Morleyization (Definition 3.3 there). However there are many other examples including the original motivation for defining $\mu$-AECs: infinite sums in boolean algebras. The point is that even though the language of boolean algebras with a sum operator is infinitary, we really need only to work in an appropriate class in a finitary language that we functorially expand as needed. This shows in a precise sense that the infinite sum operator is already implicit in the (finitary) structure of boolean algebras themselves.

\begin{defin}
  Fix infinite cardinals $\lambda \ge \mu$. Let $\Phi$ be a set of formulas in $L_{\lambda, \mu}$. Let $(\K, \lea)$ be an abstract class. Define $\K_\Phi := (\K, \leap{\K_\Phi})$ by $M \leap{\K_\Phi} N$ if and only if $M \lea N$ and $M \lee_{\Phi} N$.
\end{defin}

\begin{lemma}
  Let $\lambda \ge \mu$, $\Phi$ be a set of formulas in $L_{\lambda , \mu}$. Let $(\K, \lea)$ be a $\mu_0$-AEC with $\mu_0 \le \mu$. Then:

  \begin{enumerate}
    \item $K_{\Phi}$ is a $\mu$-AEC.
    \item If all the formulas in $\Phi$ have fewer than $\mu_0$-many quantifiers, then $\K_{\Phi}$ satisfies the first Tarski-Vaught chain axiom of $\mu_0$-AECs.
  \end{enumerate}
\end{lemma}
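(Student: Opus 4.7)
The plan is to verify each $\mu$-AEC axiom for $\K_\Phi$, leveraging the corresponding axioms of the $\mu_0$-AEC $\K$ together with standard manipulations of $\Phi$-elementarity. Throughout I will assume (or tacitly replace $\Phi$ by) the fragment it generates, so that subformulas are available; without this closure the Tarski--Vaught axiom can fail in general (an obstruction already visible when $\Phi = \{\forall x\,\exists y\, R(x,y)\}$ on finite initial segments of a successor relation).

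The abstract-class axioms and Coherence are immediate. Reflexivity, transitivity, and iso-invariance of $\leap{\K_\Phi}$ are inherited from $\K$ and from the obvious properties of $\lee_\Phi$. For Coherence: if $M_0 \leap{\K_\Phi} M_2$, $M_1 \leap{\K_\Phi} M_2$, and $M_0 \subseteq M_1$, then $M_0 \lea M_1$ by Coherence of $\K$, while $M_0 \lee_\Phi M_1$ follows by factoring through $M_2$: for $\phi \in \Phi$ and $\bar{a} \in M_0$, $M_0 \models \phi[\bar{a}]$ iff $M_2 \models \phi[\bar{a}]$ iff $M_1 \models \phi[\bar{a}]$.

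The heart of the argument is the Tarski--Vaught $\mu$-directed chain axiom. Given a $\mu$-directed system $\{M_i : i \in I\}$ in $\K_\Phi$, it is also $\mu_0$-directed, so $M := \bigcup_i M_i \in \K$ with $M_j \lea M$, and TV (b) is inherited from $\K$. To obtain $M_j \lee_\Phi M$, I would induct on the complexity of formulas $\phi$ in the fragment generated by $\Phi$, proving simultaneously for every $i \in I$ and every tuple $\bar{a} \in M_i$ of matching arity that $M_i \models \phi[\bar{a}]$ iff $M \models \phi[\bar{a}]$. The atomic, negation, and $<\lambda$-conjunction cases are routine. The essential case is $\phi = \exists \bar{x}\, \psi$ with $|\bar{x}| < \mu$: the forward direction moves a witness from $M_i$ into $M$; for the reverse, a witness $\bar{b} \in M$ with $|\bar{b}| < \mu$ lies in some $M_k$ with $k \ge i$ by $\mu$-directedness, the inductive hypothesis on $\psi$ applied to $M_k$ gives $M_k \models \psi[\bar{b}, \bar{a}]$, hence $M_k \models \phi[\bar{a}]$; since $\phi$ lies in the (closed) fragment and $M_i \leap{\K_\Phi} M_k$, we descend to $M_i \models \phi[\bar{a}]$.

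The L\"owenheim-Skolem axiom is obtained by a standard closure argument: starting from $N_0 \lea M$ of size $\le |A|^{<\mu} + \LS(\K)$ containing $A$ (via the $\K$-LS axiom), iteratively build a $\mu$-length $\lea$-chain in which $N_{\alpha+1}$ adds witnesses in $M$ for each subformula $\exists \bar{x}\, \psi$ of each $\phi \in \Phi$ and each parameter tuple in $N_\alpha$, then recloses via the $\K$-LS axiom. The union $N := \bigcup_{\alpha < \mu} N_\alpha$ lies in $\K$ (the chain is $\mu$-directed), satisfies $N \lee_\Phi M$ by the same induction as in the chain axiom, and a routine cardinal arithmetic calculation bounds its size by a suitable cardinal $\lambda' = (\lambda')^{<\mu}$ depending on $\lambda$, $|\Phi|$, and $\LS(\K)$, which we take as $\LS(\K_\Phi)$. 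For part (2), if each quantifier in $\Phi$ binds fewer than $\mu_0$ variables, then in the backward-existential step the witness tuple has size $< \mu_0$, so $\mu_0$-directedness suffices to place it in $M_k$; this yields TV (a) for $\mu_0$-directed systems. The main obstacle is exactly this backward existential case---ascending to $M_k$ to find a witness and then descending back to $M_i$---which forces the fragment interpretation of $\Phi$ and dictates the directedness hypothesis required in part (2).
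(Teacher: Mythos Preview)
Your argument is correct and expands the paper's two-line sketch (``straightforward'' for (1), ``induction on the quantifier-depth'' for (2)); both rest on the same induction over the fragment generated by $\Phi$, and you rightly make explicit that closure under subformulas is needed for the descent step $M_k \models \phi \Rightarrow M_i \models \phi$.

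The one substantive divergence is your reading of the hypothesis in (2). You prove: if each quantifier block binds fewer than $\mu_0$ variables, then witness tuples have size $<\mu_0$ and $\mu_0$-directedness suffices to locate them in a single $M_k$. The paper's literal hypothesis bounds the \emph{number} of quantifier blocks rather than their arity, and its proof is simply ``induction on quantifier-depth''; but in $L_{\lambda,\mu}$ a single block may already bind $\ge\mu_0$ variables, so bounded depth alone does not control witness size. Either the paper is tacitly using single-variable quantifiers (so that ``$<\mu_0$ quantifiers'' entails your hypothesis) or the stated condition is slightly imprecise; in either case your version delivers what the intended application---the least-upper-bound formula, with a single one-variable universal quantifier---actually requires.
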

\begin{proof}
  The first part is straightforward. The second is proven by induction on the quantifier-depth of the formulas in $\Phi$.
\end{proof}

\begin{example}
Let $T$ be a completion of the first-order theory of boolean algebras and let $\K := (\Mod (T), \lee)$. Let $\Phi$ consist of the $L_{\omega_1, \omega_1}$ formula $\phi (\bx, y)$ saying that $y$ is a least upper bound of $\bx$ (here $\ell (\bx) = \omega$). Then $\phi$ has only one universal quantifier so by the Lemma, $\K_{\Phi}$ satisfies the first Tarski-Vaught chain axiom of AECs. Of course, $\K_{\Phi}$ is also an $\aleph_1$-AEC. Now expand each $M \in \K$ to $\bigM$ by defining $R_{\Sigma}^M (\ba, b)$ to hold if and only if $b$ is a least upper bound of $\ba$ (with $\ell (\ba) = \omega$). Let $\bigK_{\Phi} := \{\bigM \mid M \in \K\}$. Then one can check that $\bigK_{\Phi}$ is a functorial expansion of $\K_{\Phi}$.
\end{example}

Basic definitions and concepts for AECs, such as amalgamation or Galois types (see \cite{baldwinbook} or \cite{ramibook} for details), can be easily transferred to $\mu$-AECs. In the following sections, we begin the process of translating essential theorems from AECs to $\mu$-AECs.

\section{Presentation Theorem} \label{present-sec}

We now turn to the Presentation Theorem for $\mu$-AECs.  This theorem has its conceptual roots in Chang's Presentation Theorem \cite{Ch}, which shows that $L_{\lambda, \omega}$ can be captured in a larger finitary language by omitting a set of types.  A more immediate predecessor is Shelah's Presentation Theorem, which reaches the same conclusion for an arbitrary AEC.  Unfortunately, while Chang's Presentation Theorem gives some insight into the original class, Shelah's theorem does not.  However, the presentation is still a useful tool for some arguments and provides a syntactic characterization of what are otherwise purely semantic objects.

\begin{defin}
Let $L \subset L_1$ be $\mu$-ary languages, $T_1$ an $(L_1)_{\mu, \mu}$ theory, and $\Gamma$ be a set of $\mu$-ary $(L_1)_{\mu, \mu}$-types.  Here we define a \emph{$\mu$-ary $(L_1)_{\mu, \mu}$-type} as a set of $(L_1)_{\mu, \mu}$ formulas in the same free variables $\bx$, where $\bx$ has arity less than $\mu$.  We define  
\begin{itemize}
	\item $\EC^\mu(T_1, \Gamma) = \{ M : M \te{an } L_1\te{-structure,}\,\,M\models T_1, M\te{omits each type in }\Gamma \}$
	\item $\PC^\mu(T_1, \Gamma, L) = \{ M \rest L : M \in \EC^\mu(T_1, \Gamma) \}$
\end{itemize}
\end{defin}

\begin{theorem}\label{pres-thm}
Let $\K$ be a $\mu$-AEC with $\LS(\K) = \chi$.  Then we can find some $L_1 \supset \lan(\K)$, a $(L_1)_{\mu, \mu}$-theory $T_1$ of size $\chi$, and a set $\Gamma$ of $\mu$-ary $(L_1)_{\mu, \mu}$-types with $\scard{\Gamma} \le 2^\chi$ so that $\K = \PC^\mu(T_1, \Gamma, \lan(\K))$.
\end{theorem}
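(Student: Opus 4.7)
The plan is to adapt Shelah's Presentation Theorem to the $\mu$-ary, infinitary setting, with careful attention to the cardinal arithmetic and to the $\mu$-directedness built into the Tarski-Vaught axiom. First, expand $\lan(\K)$ to a $\mu$-ary language $L_1 := \lan(\K) \cup \{F_{\alpha, i} : \alpha < \mu, i < \chi\}$, where each $F_{\alpha, i}$ has arity $\alpha$. Since $\chi \geq \scard{\lan(\K)} + \mu$, we have $\scard{L_1} \leq \chi$.

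For each $M \in \K$, construct an $L_1$-expansion $\widehat{M}$ as follows. Using the L\"owenheim-Skolem-Tarski axiom---with the arithmetic $\chi = \chi^{<\mu}$ giving $\scard{\ba}^{<\mu} + \chi \leq \chi$ for every tuple $\ba \in |M|^{<\mu}$---pick a $\lea$-submodel $N_\ba^M \lea M$ of cardinality $\leq \chi$ containing $\ba$. Make these choices coherently by transfinite recursion along a well-ordering of $|M|^{<\mu}$, so that whenever the underlying set of $\ba$ is contained in that of $\bb$ we have $N_\ba^M \subseteq N_\bb^M$, and hence, by the Coherence axiom, $N_\ba^M \lea N_\bb^M$. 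Interpret $F_{\alpha, i}^{\widehat{M}}(\ba)$ so that $\{F_{\alpha, i}^{\widehat{M}}(\ba) : i < \chi\}$ enumerates $N_\ba^M$, with the first $\alpha$ values returning the entries of $\ba$. Let $T_1$ be the $(L_1)_{\mu, \mu}$-theory axiomatizing: (i) $F_{\alpha, i}(\bx) = x_i$ for $i < \alpha$; (ii) closure of the set $\{F_{\alpha, i}(\bx) : i < \chi\}$ under all $\lan(\K)$-symbols via fixed reindexings (available since $\chi^{<\mu} = \chi$, so each application of an $\lan(\K)$-operation to $F$-terms can be named by a single $F$-term); and (iii) compatibility across different tuples. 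Careful bookkeeping gives $\scard{T_1} \leq \chi$. Let $\Gamma$ contain, for each isomorphism type $\tau$ of an $L_1$-structure of cardinality $\leq \chi$ whose $\lan(\K)$-reduct is not in $\K$ (or which otherwise violates the intended coherence), the quantifier-free type $p_\tau(\bx)$ asserting that $\bx$, together with its $F$-generated terms, realizes $\tau$. Since there are at most $2^\chi$ such isomorphism types, $\scard{\Gamma} \leq 2^\chi$.

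To verify $\K = \PC^\mu(T_1, \Gamma, \lan(\K))$: the $\subseteq$ direction is immediate from the construction, since each $N_\ba^M \in \K$ guarantees that $\widehat{M}$ omits every $p_\tau \in \Gamma$. For $\supseteq$, suppose $\widehat{M}' \models T_1$ omits $\Gamma$ and has $\lan(\K)$-reduct $M'$; then each generated closure $\{F_{\alpha, i}^{\widehat{M}'}(\ba) : i < \chi\}$ has $\lan(\K)$-reduct in $\K$ (else $\ba$ would realize a type in $\Gamma$), and the coherence axioms in $T_1$ force these closures to form a $\mu$-directed system under $\lea$ whose union is $M'$; the Tarski-Vaught chain axiom then yields $M' \in \K$. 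The principal obstacle I expect lies in the syntactic encoding in (ii), (iii), and in the definition of the $p_\tau$: a $\chi$-sized generated substructure must be pinned down by a type using only $<\mu$ free variables $\bx$. This is handled by exploiting the terms $F_{\alpha, i}(\bx)$ as ``names'' for the $\leq \chi$ elements of the closure, so that the closure's atomic diagram translates into a set of $(L_1)_{\mu, \mu}$-formulas in the single bounded-arity variable $\bx$; the extra delicacy relative to the classical AEC case lies in simultaneously enforcing $\mu$-directedness of the system of closures---a subtler requirement than mere directedness and indispensable for invoking the Tarski-Vaught axiom for $\mu$-AECs.
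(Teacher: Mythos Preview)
Your approach is essentially the paper's, but the paper streamlines it in two ways. First, $T_1$ contains only the projection axioms $F_i^\alpha(\bx) = x_i$ for $i < \alpha$ (plus a nonemptiness axiom), and the closures $N_\ba^{M_1}$ are taken to be the $L_1$-substructures generated by $\ba$; this makes nesting under $\subseteq$ and closure under $\lan(\K)$-operations automatic, so your axioms (ii) and (iii) become unnecessary. Second, and more importantly, the $\lea$-directedness of the system $\{N_\ba : \ba \in {}^{<\mu}|M_1|\}$ cannot be forced by axioms in $T_1$, since $\lea$ is a semantic relation on $\K$ with no syntactic definition. The paper handles this entirely through $\Gamma$: it omits every quantifier-free type $p_\ba^{M_1}$ for which there exists $\bb \subseteq \ba$ with $(N_\bb^{M_1}) \rest \lan(\K) \not\lea (N_\ba^{M_1}) \rest \lan(\K)$ (the case $\bb = \ba$ covers ``reduct not in $\K$''). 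Your sentence ``the coherence axioms in $T_1$ force these closures to form a $\mu$-directed system under $\lea$'' is therefore the weak spot; what you need is to make explicit that the clause ``or which otherwise violates the intended coherence'' in your description of $\Gamma$ is exactly this $\lea$-failure condition on subtuples, and to drop the suggestion that $T_1$ does any of that work.
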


Although we don't state them here, the traditional moreover clauses (see e.g.\ the statement of \cite[Theorem 4.15]{baldwinbook}) apply as well.

\begin{proof} 
We adapt the standard proofs; see, for instance, \cite[Theorem 4.15]{baldwinbook}.  Set $\chi := \LS(\K)$.  We introduce ``Skolem functions'' $L_1 := L(\K) \cup \{ F_i^\alpha(\bx) : i < \chi, \ell(\bx) = \alpha < \mu\}$ and make very minimal demands by setting
$$T_1 := \{ \exists x(x = x) \} \cup \{ \forall \bx F_i^\alpha(\bx) = x_i : i <\alpha < \mu\}$$
For any $M_1 \vDash T_1$ and $\ba \in |M_1|$, we define $N_\ba^{M_1}$ to be the minimal $L_1$-substructure of $M_1$ that contains $\ba$.  We can code the information about $N_\ba^{M_1}$ into $\ba$'s quantifier-free type:
$$p_\ba^{M_1} := \{ \phi(\bx) : \phi(\bx) \in (L_1)_{\mu, \mu} \text{ is quantifier-free and } M_1 \vDash \phi(\ba) \}$$
Given tuples $\ba \in M_1$ and $\bb \in M_2$ of the same length, we have that $p_\ba^{M_1} = p_\bb^{M_2}$ if and only if the map taking $\ba$ to $\bb$ induces an isomorphism $N_\ba^{M_1} \cong N_\bb^{M_2}$.  Since we have this tight connection between types and structures, we precisely want to exclude types that give rise to structures not coming from $\K$.  Thus, we set
\begin{eqnarray*}
\Gamma^{M_1} &:=& \bigcup \{ p_\ba^{M_1} : \exists \bb \subset \ba \text{ such that } (N^{M_1}_\bb) \rest L(\K) \not \lea  (N^{M_1}_\ba) \rest L(\K)\}\\
\Gamma &:=& \bigcup_{M_1 \vDash T_1} \Gamma^{M_1}
\end{eqnarray*}
Note that the $\bb$ in the first line might be $\ba$, in which case the condition becomes $N_\ba^{M_1} \rest L(\K) \not\in \K$.  By counting the number of $(L_1)_{\mu, \mu}$-types, we have that $\scard{\Gamma} \le 2^\chi$. Now all we have left to show is the following claim.

{\bf Claim:} $\K = \PC(T_1, \Gamma, \lan(\K))$\\
First, let $M_1 \in \EC(T_1, \Gamma)$.  Given $\ba \in {}^{<\mu}|M_1|$, we know that $\ba \vDash p_\ba^{M_1}$ so $p_\ba^{M_1} \notin \Gamma$.  Thus, $\{ N_\ba^{M_1} \rest \lan(\K) : \ba \in {}^{< \mu} |M_1| \}$ is a $\mu$-directed system from $\K_{\le \chi}$ with union $M_1 \rest \lan(\K)$, so $M_1 \rest \lan(\K) \in\K$.

Second, let $M \in\K$.  We need to define an expansion $M_1 \in \EC(T_1, \Gamma)$.  We can build a directed system $\{M_\ba \in\K_{\chi} : \ba \in {}^{< \mu}|M|\}$.  Since each $M_\ba$ has size $\chi$, we can define the $F_i^\alpha$ by enumerating $|M_\ba| = \{ F^{\ell(\ba)}_i(\ba) : i < \chi\}$ with the condition that $F^{\ell(\ba)}_i(\ba) = a_i$ for $i < \ell(\ba)$.  This precisely defines the expansion $M_1 := \seq{M, F_i^\alpha}_{i<\chi, \alpha<\mu}$.  It is easy to see $M_1 \vDash T_1$.  We also have $N_\ba^{M_1} \rest L(\K) = M_\ba$, so $M_1$ omits $\Gamma$ because $\{M_\ba : \ba \in {}^{<\mu}|M|\}$ is a $\mu$-directed system from $\K$.  So $M \in PC^\mu(T_1, \Gamma, L)$.
\end{proof}

\begin{remark}
  A consequence of the presentation theorem for AECs is that an AEC $\K$ with a model of size $\beth_{(2^{\LS (\K)})^+}$ has arbitrarily large models (see e.g.\ \cite[Corollary 4.26]{baldwinbook}). The lack of Hanf numbers for $L_{\mu, \mu}$ means that we cannot use this to get similar results for $\mu$-AECs.  Thus the following question is still open: Can we compute a bound for the Hanf number $H(\lambda, \mu)$, where any $\mu$-AEC $\K$ with $\LS(\K) \leq \lambda$ that has a model larger than $H(\lambda, \mu)$ has arbitrarily large models?  
\end{remark}

\section{$\mu$-AECS and accessible categories} \label{access-sec}

Accessible categories were introduced in \cite{makkaiparebook} as categories closely connected with categories of models of $L_{\kappa,\lambda}$
theories. Roughly speaking, an accessible category is one that is closed under certain directed colimits, and whose objects can be built via certain directed colimits of a set of small objects.  To be precise, we say that a category $\ck$ is $\lambda$-\textit{accessible}, $\lambda$ a regular cardinal, if it closed under $\lambda$-directed colimits (i.e. colimits indexed by a $\lambda$-directed poset) and contains, up to isomorphism, a set $\ca$ 
of $\lambda$-presentable objects such that each object of $\ck$ is a $\lambda$-directed colimit of objects from $\ca$. Here $\lambda$-presentability functions as a notion of size that makes sense in a general, i.e. non-concrete, category: we say an object $M$ is $\lambda$-\textit{presentable} if its hom-functor $\ck(M,-):\ck\to\Set$ preserves $\lambda$-directed colimits. Put another way, $M$ is $\lambda$-presentable if for any morphism $f:M\to N$ with $N$ a $\lambda$-directed colimit $\langle \phi_\alpha:N_\alpha\to N\rangle$, $f$ factors essentially uniquely through one of the $N_\alpha$, i.e. $f=\phi_\alpha f_\alpha$ for some $f_\alpha:M\to N_\alpha$. 

For each regular cardinal $\kappa$, an accessible category $\ck$ contains, up to isomorphism, only a set of $\kappa$-presentable objects.
Any object $M$ of a $\lambda$-accessible category is $\kappa$-presentable for some regular cardinal $\kappa$. Given an object $M$, the smallest cardinal $\kappa$ such that $M$ is $\kappa$-presentable is called the \textit{presentability rank} of $M$. If the presentability rank of $M$ is a successor cardinal 
$\kappa=\|M\|^+$ then $\|M\|$ is called the \textit{internal size} of $M$ (this always happens if $\ck$ has directed colimits or under GCH, see \cite{bekerosicky} 4.2 or 2.3.5).  This notion of size internal to a particular category more closely resembles a notion of dimension---in the category $\Met$ of complete metric spaces with isometric embeddings, for example, the internal size of an object is precisely its density character---and, even in case the category is concrete, may not correspond to the cardinality of underlying sets.  This distinction will resurface most clearly in the discussion at the beginning of Section~\ref{classthysect} below.

We consider the category-theoretic structure of $\mu$-AECs.  As we will see, for any uncountable cardinal $\mu$, any $\mu$-AEC with L\"owenheim-Skolem-Tarski number $\lambda$ is a $\lambda^+$-accessible category whose morphisms are monomorphisms, and that (perhaps more surprisingly) any $\mu$-accessible category whose morphisms are monomorphisms is equivalent to a $\mu$-AEC with L\"owenheim-Skolem-Tarski number $\lambda=\max(\mu,\nu)^{<\mu}$, where $\nu$, discussed in detail below, is the number of morphisms between $\mu$-presentable objects.  

It is of no small interest that a general $\mu$-accessible category also satisfies a L\"owenheim-Skolem-Tarski axiom of sorts, governed by the sharp inequality relation, $\slesseq$\footnote{The sharp inequality was introduced by Makkai and Pare \cite[Section 2.3]{makkaiparebook} and is defined by $\kappa \slesseq \kappa'$ if and only if every $\kappa$-accessible category is also a $\kappa'$-accessible category, among other equivalent conditions.}.  As we will see, this notion (see \cite{makkaiparebook}) matches up perfectly with the behavior of $\mu$-AECs conditioned by axiom \ref{defmaec}(3).

We wish to show that $\mu$-AECs and accessible categories are equivalent.  For the easy direction---that every $\mu$-AEC is accessible---we simply follow the argument for the corresponding fact for AECs in Section 4 of \cite{liebapal}.

\begin{lemma} Let $\ck$ be a $\mu$-AEC with L\"owenheim-Skolem-Tarski number $\lambda$.  Any $M\in\ck$ can be expressed as a $\lambda^+$-directed union of its $\ksst$-substructures of size at most $\lambda$.\end{lemma}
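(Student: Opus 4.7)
The plan is to take $\mathcal{F}$ to be the set of all $\ksst$-substructures of $M$ of size at most $\lambda$ and prove three things: (i) the underlying sets of members of $\mathcal{F}$ cover $|M|$, (ii) $\mathcal{F}$ is $\lambda^+$-directed under $\ksst$, and (iii) the Tarski-Vaught union of the resulting system is $M$ itself.

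For (i), given any $a\in |M|$, apply the L\"owenheim-Skolem-Tarski axiom with $A=\{a\}$ to obtain $N\ksst M$ with $a\in |N|$ and $\mcard{N}\leq 1^{<\mu}+\lambda=\lambda$, putting $N\in\mathcal{F}$. For (ii), suppose $\{N_i:i\in I\}\subseteq\mathcal{F}$ with $\scard{I}\leq\lambda$, and set $A=\bigcup_{i\in I}|N_i|$. Since $\lambda\geq\mu\geq\aleph_0$ is infinite, $\scard{A}\leq\lambda\cdot\lambda=\lambda$; apply LST to $A$ to obtain $N\ksst M$ with $A\subseteq|N|$ and $\mcard{N}\leq\lambda^{<\mu}+\lambda=\lambda$, crucially using the hypothesis $\lambda=\lambda^{<\mu}$ that is built into the LST axiom. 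Then $N\in\mathcal{F}$, and for each $i\in I$ we have $N_i\subseteq N$ with both $N_i\ksst M$ and $N\ksst M$, so coherence yields $N_i\ksst N$.

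For (iii), observe $\lambda^+ > \lambda\geq\mu$, so any $\lambda^+$-directed system is in particular $\mu$-directed and the Tarski-Vaught chain axioms apply to $\mathcal{F}$. Part (a) gives $\bigcup\mathcal{F}\in\ck$, while part (b), with $M$ as the common upper bound, gives $\bigcup\mathcal{F}\ksst M$. Combined with (i), the universe of $\bigcup\mathcal{F}$ equals $|M|$, so $\bigcup\mathcal{F}=M$.

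The main conceptual step is the directedness claim (ii), and the only non-routine point is the cardinal arithmetic: we genuinely need the cardinal arithmetic condition $\lambda=\lambda^{<\mu}$ from Definition \ref{defmaec}(3), since otherwise the LST bound applied to a union of $\lambda$-many substructures of size $\leq\lambda$ would exceed $\lambda$ and we could not close $\mathcal{F}$ up under joins. Everything else is a direct translation of the standard AEC argument.
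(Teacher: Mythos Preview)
Your proof is correct and follows essentially the same approach as the paper: the core step in both is the $\lambda^+$-directedness argument, using the L\"owenheim-Skolem-Tarski axiom on the union of fewer than $\lambda^+$ substructures (together with $\lambda^{<\mu}=\lambda$) and then invoking coherence to conclude that the small models sit $\ksst$-below the new one. Your write-up is in fact slightly more thorough, as you make explicit the covering step (i) and the identification of the union with $M$ in (iii), which the paper leaves implicit.
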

\begin{proof}  Consider the diagram consisting of all $\ksst$-substructures of $M$ of size at most $\lambda$ and with arrows the $\ksst$-inclusions.  To check that this diagram is $\lambda^+$-directed, we must show that any collection of fewer than $\lambda^+$ many such submodels have a common extension also belonging to the diagram.  Let $\{M_\alpha\,|\,\alpha<\nu\}$, $\nu<\lambda^+$, be such a collection.  Since $\lambda^+$ is regular, $\sup\{|M_\alpha|\,|\,\alpha<\nu\}<\lambda^+$, whence
$$\Mcard{\bigcup_{\alpha<\nu}M_\alpha}\leq\nu\cdot\sup\{\mcard{M_\alpha}\,|\,\alpha<\nu\}\leq\nu\cdot\lambda=\lambda$$
This set will be contained in some $M'\lea M$ with $\mcard{M'}\leq \lambda^{<\mu}+\lambda=\lambda+\lambda=\lambda$, by the L\"owenheim Skolem-Tarski axiom.  For each $\alpha<\nu$, $M_\alpha\ksst M$ and $M_\alpha\subseteq M'$.  Since $M'\ksst M$, coherence implies that $M_\alpha\ksst M'$.  So we are done. \end{proof}

\begin{lemma}\label{aecpres} 
Let $\ck$ be a $\mu$-AEC with L\"owenheim-Skolem-Tarski number $\lambda$.  A model $M$ is $\lambda^+$-presentable in $\ck$ if and only if $\mcard{M}\leq\lambda$.
\end{lemma}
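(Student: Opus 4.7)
I plan to prove both directions directly from the $\mu$-AEC axioms, using the preceding lemma together with the Tarski-Vaught chain axiom and coherence. The key preliminary observation is that since $\lambda^+ > \lambda \geq \mu$, any $\lambda^+$-directed diagram in $\ck$ is in particular $\mu$-directed, so after replacing it by the isomorphic diagram whose objects are the images of the colimit legs (which are $\ksst$-substructures of the colimit object), the Tarski-Vaught chain axiom identifies its colimit with the set-theoretic union. This makes $\lambda^+$-directed colimits in $\ck$ very concrete.

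For $(\Leftarrow)$, suppose $\mcard{M} \le \lambda$. Let $(\phi_\alpha : D(\alpha) \to N)_{\alpha \in I}$ be a $\lambda^+$-directed colimit cocone in $\ck$ and let $f : M \to N$ be a $\ck$-morphism; I will factor $f$ essentially uniquely through some $D(\beta)$. After the replacement above we may assume $N = \bigcup_\alpha D(\alpha)$ with each $D(\alpha) \ksst N$. Since $\scard{f(M)} \le \lambda$, each element of $f(M)$ lies in some $D(\alpha_a)$, and $\scard{\{\alpha_a\}} \le \lambda < \lambda^+$, so $\lambda^+$-directedness of $I$ produces $\beta \in I$ with $f(M) \subseteq D(\beta)$. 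Coherence applied to $f(M) \ksst N$, $D(\beta) \ksst N$, and $f(M) \subseteq D(\beta)$ gives $f(M) \ksst D(\beta)$, which lifts to the desired factorization of $f$ through $D(\beta)$. Essential uniqueness is immediate because all $\ck$-morphisms are monomorphisms: any two such factorizations are equalized above a common upper bound in $I$ after cancelling the relevant mono colimit leg.

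For $(\Rightarrow)$, suppose $M$ is $\lambda^+$-presentable. By the preceding lemma, $M$ is the $\lambda^+$-directed colimit, along inclusions, of its $\ksst$-substructures of size $\le \lambda$. Applying $\lambda^+$-presentability to $\id_M$ produces a factorization $\id_M = \iota \circ g$ with $\iota : M' \hookrightarrow M$ the inclusion of some $M' \ksst M$ of size $\le \lambda$ and $g : M \to M'$ a $\ck$-morphism; since $\ck$-morphisms are injective and $\iota \circ g = \id_M$, $\iota$ must be surjective, so $M = M'$ and $\mcard{M} \le \lambda$. The only conceptually delicate point in the whole argument is the preliminary identification of $\lambda^+$-directed colimits in $\ck$ with unions (depending crucially on $\lambda^+ > \mu$); everything else is a short combination of coherence, the L\"owenheim-Skolem-Tarski axiom, and the fact that $\ck$-morphisms are monomorphisms.
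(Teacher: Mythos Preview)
Your proof is correct and is precisely the argument the paper has in mind: the paper's own proof consists only of the sentence ``See the proof of Lemma 4.3 in \cite{liebapal},'' and what you have written is exactly that argument, carried over from AECs to $\mu$-AECs with the obvious adjustment that one uses $\lambda^+ > \mu$ to invoke the Tarski-Vaught axioms for $\mu$-directed systems. There is nothing to add.
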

\begin{proof} See the proof of Lemma 4.3 in \cite{liebapal}.\end{proof}

Taken together, these  lemmas imply that any $\mu$-AEC with L\"owenheim-Skolem-Tarski number $\lambda$ contains a set of $\lambda^+$-presentable objects, namely $\ck_{<\lambda^+}$, and that any model can be built as a $\lambda^+$-directed colimit of such objects.  As the Tarski-Vaught axioms ensure closure under $\mu$-directed colimits and $\lambda\geq\mu$, it follows that $\ck$ is closed under $\lambda^+$-directed colimits.  Thus we have:

\begin{theorem}\label{kaecsacc} Let $\ck$ be a $\mu$-AEC with L\"owenheim-Skolem-Tarski number $\lambda$.  Then $\ck$ is a $\lambda^+$-accessible category.
\end{theorem}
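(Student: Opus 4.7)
The plan is to verify directly the three defining conditions of $\lambda^+$-accessibility for $\ck$: closure under $\lambda^+$-directed colimits, the existence of a set (up to isomorphism) of $\lambda^+$-presentable objects, and that every object of $\ck$ is a $\lambda^+$-directed colimit of objects from that set. Of these, the latter two have essentially already been established by the two preceding lemmas, so the bulk of the work is organizing the pieces.

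First I would handle closure under $\lambda^+$-directed colimits. Since $\lambda\geq\mu$, any $\lambda^+$-directed poset is a fortiori $\mu$-directed, so the Tarski-Vaught chain axiom (part (2) of Definition~\ref{defmaec}) applies: the $L$-substructure union of such a diagram lies in $\ck$ and is a $\lea$-extension of each term. It remains to check that this union, viewed in $\ck$, really is the categorical colimit. The cocone property is given by part (2)(a) of the axiom, and universality follows from part (2)(b) together with coherence: any competing cocone to some $N\in\ck$ has underlying set-theoretic union embedded in $N$, at which point (2)(b) supplies a mediating $\lea$-morphism, unique because all morphisms are substructure embeddings.

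Next I would assemble a representative set of $\lambda^+$-presentable objects. By Lemma~\ref{aecpres}, an object is $\lambda^+$-presentable precisely when its universe has cardinality at most $\lambda$. Since $\lan(\ck)$ has size at most $\LS(\ck)=\lambda$ (this is built into the LST axiom) and $\lan(\ck)$ is $\mu$-ary with $\mu\leq\lambda$, there are, up to isomorphism, only set-many $L$-structures of universe size $\leq\lambda$, and hence only set-many $\lambda^+$-presentable objects of $\ck$. Call this set $\ca$. The final condition, that every $M\in\ck$ is a $\lambda^+$-directed colimit of objects from $\ca$, is the content of the first of the two preceding lemmas: $M$ is the union, hence (by the argument of the previous paragraph) the colimit, of the $\lambda^+$-directed system of its $\lea$-substructures of size at most $\lambda$, and each of these lies in $\ca$ by Lemma~\ref{aecpres}.

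I do not anticipate a genuine obstacle here; the only subtlety worth flagging is the translation between the ``union of a directed system'' language used in the Tarski-Vaught axiom and the categorical notion of colimit in $\ck$. The bridging observation is that $\ck$, as an iso-full replete subcategory of $\Emb(L)$, inherits colimits of $\mu$-directed diagrams from $\Emb(L)$ whenever those colimits happen to lie in $\ck$ and are $\lea$-cocones, which is exactly what the Tarski-Vaught axioms (together with coherence for universality) guarantee.
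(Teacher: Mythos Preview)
Your proposal is correct and follows essentially the same approach as the paper: the paper's argument is the short paragraph immediately preceding the theorem, which invokes the two lemmas for the set of $\lambda^+$-presentables and the $\lambda^+$-directed decomposition, and then notes that $\lambda\geq\mu$ gives closure under $\lambda^+$-directed colimits via the Tarski-Vaught axioms. You supply somewhat more detail than the paper does---in particular, the verification that the union is genuinely the categorical colimit and that there are only set-many $\lambda^+$-presentables up to isomorphism---but the structure and the key ingredients are identical.
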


\begin{remark}\label{down}
Theorem \ref{kaecsacc} is valid for any $\lambda$ from \ref{defmaec}(3) and not only for the minimal one. Moreover, we only need that $\lambda$ satisfies
the  L\"owenheim-Skolem-Tarski property for $\scard{A}\leq\lambda$. In this case, we will say that $\lambda$ is a \emph{weak L\"owenheim-Skolem-Tarski number}.
\end{remark}

We now aim to prove that any accessible category whose morphisms are monomorphisms is a $\mu$-AEC for some $\mu$. In fact, there are two cases delineated below, concrete and abstract.  In Theorem~\ref{niceacckaec} we consider the concrete case: $\ck$ is taken to be a $\kappa$-accessible category of $L$-structures and 
$L$-embeddings for some $\mu$-ary language $L$ where $\mu+\scard{L}\leq\kappa$. In particular, we insist that $\ck$ sits nicely in $\Emb(L)$, the category of all $L$-structures and substructure embeddings---we may assume $L$ is relational.  In Theorem~\ref{equivthm}, we consider abstract accessible categories, with no prescribed signature or underlying sets. 

\begin{theorem}\label{niceacckaec} Let $L$ be a $\mu$-ary signature and $\ck$ be an iso-full, replete and coherent $\kappa$-accessible subcategory
of $\Emb(L)$ where $\mu+\scard{L}\leq\kappa$. If $\ck$ is closed under $\mu$-directed colimits in $\Emb(L)$ and the embedding $\ck\to\Emb(L)$ preserves 
$\kappa$-presentable objects then $\ck$ is a $\mu$-AEC with $\LS(\K)\leq\lambda=\kappa^{<\mu}$.\end{theorem}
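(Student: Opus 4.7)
The plan is to verify each axiom of a $\mu$-AEC in turn for $\ck$; the L\"owenheim-Skolem-Tarski clause will be the only substantive step. The underlying abstract-class axioms are immediate from iso-fullness and repleteness of $\ck$ inside $\Emb(L)$, once one stipulates that $M\lea N$ means the $L$-inclusion $M\hookrightarrow N$ belongs to $\ck$; coherence is explicitly assumed; and the two Tarski-Vaught chain axioms are essentially a restatement of closure of $\ck$ under $\mu$-directed colimits in $\Emb(L)$, with (b) following from the universal property of such colimits applied to any upper-bound cocone $N\in\ck$.

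For LST, set $\lambda:=\kappa^{<\mu}$ and note $\lambda^{<\mu}=\lambda\geq\kappa\geq\mu+\scard{\lan(\K)}$. Given $M\in\ck$ and $A\subseteq|M|$, I would use $\kappa$-accessibility of $\ck$ to write $M$ as a $\kappa$-directed colimit $\operatorname{colim}_{i\in I}M_i$ of $\kappa$-presentable objects of $\ck$. Since $\ck\to\Emb(L)$ preserves $\kappa$-presentability and $\kappa\geq\mu+\scard{L}$, the standard fact that $\kappa$-presentable objects of $\Emb(L)$ have underlying set of cardinality strictly less than $\kappa$ yields $\mcard{M_i}<\kappa$. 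Identifying each $M_i$ with its image in $M$ (possible by iso-fullness and repleteness), we have $M_i\lea M$ and $|M|=\bigcup_{i\in I}|M_i|$; for each $a\in A$ pick $i_a\in I$ with $a\in|M_{i_a}|$ and set $J_0:=\{i_a\mid a\in A\}$.

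The crux, and main obstacle, is to enlarge $J_0$ to a $\mu$-directed subset $J\subseteq I$ without overshooting the size bound $\scard{A}^{<\mu}+\lambda$. Since $\mu\leq\kappa$, $\kappa$-directedness of $I$ furnishes an upper-bound selector $f\colon[I]^{<\mu}\to I$. I would iterate $\mu$ times by $J_{\alpha+1}:=J_\alpha\cup f([J_\alpha]^{<\mu})$, taking unions at limits, and put $J:=J_\mu$. Using $(\scard{A}^{<\mu})^{<\mu}=\scard{A}^{<\mu}$ (regularity of $\mu$), a routine induction bounds $\scard{J}\leq\scard{A}^{<\mu}+\mu$; regularity of $\mu$ also yields $\mu$-directedness of $J$, since any $<\mu$-sized subset sits inside some $J_\alpha$ with $\alpha<\mu$ and thus has an upper bound in $J_{\alpha+1}\subseteq J$.

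Setting $N:=\bigcup_{j\in J}M_j$, the transition maps of the original $\kappa$-directed diagram restrict to $\ck$-arrows $M_j\lea M_{j'}$ for $j\leq j'$ in $J$, so $\{M_j\}_{j\in J}$ is a $\mu$-directed system in $\ck$. The Tarski-Vaught axioms then deliver $N\in\ck$ with $N\lea M$; by construction $A\subseteq|N|$, and $\mcard{N}\leq\scard{J}\cdot\kappa\leq\scard{A}^{<\mu}+\lambda$, giving $\LS(\K)\leq\lambda$. The length-$\mu$ iteration is the delicate ingredient that balances $\mu$-directedness against the cardinality constraint: shorter iterations need not yield a $\mu$-directed poset, while longer ones could overshoot the bound.
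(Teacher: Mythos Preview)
Your overall strategy is sound and the closure-by-iteration argument for the L\"owenheim-Skolem-Tarski axiom works, but one assertion needs repair: it is \emph{not} true in general that $\kappa$-presentable objects of $\Emb(L)$ have underlying set of cardinality $<\kappa$ when $L$ is genuinely $\mu$-ary. For a concrete counterexample, take $\mu=\kappa=\aleph_1$, a single $\omega$-ary function symbol, and (under $\neg\mathrm{CH}$) a structure of size $\aleph_1$ generated by a countable set; any embedding of such a structure into an $\aleph_1$-directed colimit factors through a stage as soon as the countable generating set does, so it is $\aleph_1$-presentable despite having size $\aleph_1$. What \emph{is} true is that a $\kappa$-presentable object of $\Emb(L)$ is generated by fewer than $\kappa$ elements, hence has cardinality at most $\kappa^{<\mu}=\lambda$. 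Replacing $\mcard{M_i}<\kappa$ by $\mcard{M_i}\le\lambda$ throughout, your final bound becomes $\mcard{N}\le\scard{J}\cdot\lambda\le(\scard{A}^{<\mu}+\mu)\cdot\lambda$, which is still $\le\scard{A}^{<\mu}+\lambda$ in both the cases $\scard{A}\le\lambda$ and $\scard{A}>\lambda$. So the argument survives intact after this correction.

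Your route is genuinely different from the paper's. Rather than close off a $\mu$-directed subdiagram of the $\kappa$-directed presentation by iterating an upper-bound selector, the paper \emph{raises the index of accessibility}: setting $\alpha=\scard{A}$, it invokes the sharp inequality $\mu\slesseq(\alpha^{<\mu})^+$ together with results from \cite{LRmetric} (4.10 and 4.6) to conclude that $\ck$ is $(\alpha^{<\mu})^+$-accessible and that the embedding into $\Emb(L)$ still preserves $(\alpha^{<\mu})^+$-presentables. The required $N$ is then a \emph{single} $(\alpha^{<\mu})^+$-presentable node of the resulting $(\alpha^{<\mu})^+$-directed presentation of $M$, chosen to contain $A$. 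Your argument is more elementary and entirely self-contained, avoiding the sharp-inequality machinery; the paper's argument is more native to accessible-category theory and makes explicit how the relevant accessibility rank scales with $\scard{A}$.
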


%Note that $\mu^{<\mu}$ is the minimal possible $\LS(\K)$ for a $\mu$-AEC.

\begin{proof}  We verify that $\ck$ satisfies the axioms in Definitions~\ref{defmac} and \ref{defmaec}.

Given such a category, we define the relation $\ksst$ as we must: for $M, N\in\ck$, $M\ksst N$ if and only if $M\subseteq N$ and the inclusion is a morphism in $\ck$.  Axiom~\ref{defmac}(1) follows immediately from this definition.  Axiom~\ref{defmac}(2) follows from the assumption that the inclusion $E$ is replete and iso-full, while~\ref{defmaec}(1) follows from the assumption that the aforementioned inclusion is a coherent functor.  \ref{defmaec}(2) is easily verified: given a $\mu$-directed system $\langle M_i\,|\,i\in I\rangle$ in $\K$, the colimit lies in $\ck$ (by $\mu$-accessibility), and since the inclusion $E$ preserves $\mu$-directed colimits, it will be precisely the union of the system.  So $\ck$ is closed under $\mu$-directed unions.  The other clauses of~\ref{defmaec}(2) are clear as well.

Axiom~\ref{defmaec}(3), the L\"owenheim-Skolem-Tarski Property, poses more of a challenge.  To begin, we recall that in $\Emb(L)$, an object is $\kappa^+$-presentable for $\kappa=\kappa^{<\mu}\geq\mu+\scard{L}$ precisely if its underlying set is of cardinality at most $\kappa$.  

Recall that we intend to show that $\lambda=\kappa^{<\mu}$ satisfies \ref{defmaec}(3).  Let $M\in\ck$ and $A\subseteq |M|$ with $|A|=\alpha>\lambda$. We begin by showing that $\ck$ is $(\alpha^{<\mu})^+$-accessible. This is an consequence of \cite{LRmetric} 4.10 because $\kappa\leq\lambda<(\alpha^{<\mu})^+$ and 
$\mu\slesseq(\alpha^{<\mu})^+$. The sharp inequality is a consequence of Example 2.13(4) in \cite{adamekrosicky}: for any cardinals $\beta<(\alpha^{<\mu})^+$ 
and $\gamma<\mu$,
$$\beta^{\gamma}\leq(\alpha^{<\mu})^\gamma=\alpha^{<\mu}<(\alpha^{<\mu})^+.$$
Since $\ck$ is $(\alpha^{<\mu})^+$-accessible, we can express $M$ as an $(\alpha^{<\mu})^+$-directed colimit of $(\alpha^{<\mu})^+$-presentable objects in 
$\ck$, say $\langle M_i\to M\,|\,i\in I\rangle$---indeed, we may assume without loss that this is a $(\alpha^{<\mu})^+$-directed system of inclusions.  
Following \cite{LRmetric} 4.6, $E:\ck\to\Emb(L)$ preserves $(\alpha^{<\mu})^+$-presentable objects---hence the $M_i$ are also $(\alpha^{<\mu})^+$-presentable 
in $\Emb(L)$, and thus of cardinality at most $\alpha^{<\mu}=\scard{A}^{<\mu}$, by the remark in the previous paragraph.  For each $a\in A$, choose $M_{i_a}$ with 
$a\in |M_{i_a}|$.  The set of all such $M_{i_a}$ is of size at most $\alpha<(\alpha^{<\mu})^+$ and we have chosen the colimit to be $(\alpha^{<\mu})^+$-directed, so there is some $M'=M_j$, $j\in I$, with $M_{i_a}\ksst M'$ for all $a\in A$.  Hence $A\subseteq |M'|$, $M'\ksst M$, and 
$$\mcard{M'}\leq \alpha^{<\mu}\leq\alpha^{<\mu}+\lambda=\scard{A}^{<\mu}+\lambda.$$
We now consider the case $\scard{A}\leq\lambda$. Hence 
$$\scard{A}^{<\mu}\leq \lambda^{<\mu} = \lambda$$
So the cardinal bound in the L\"owenheim-Skolem-Tarski Property defaults to $\lambda$. Since $\mu\slesseq\lambda^+$ (by \cite{adamekrosicky} 2.13(4) again)
and $\kappa\leq\lambda^+$, \cite{LRmetric} 4.10 and 4.6 imply that $\ck$ is $\lambda^+$-accessible and the functor $E:\ck\to\Emb(L)$ preserves $\lambda^+$-presentable objects.
Thus we may use the same argument as above to find $M'\ksst M$ of size $\lambda$ containing $A$.
\end{proof}

\begin{remark}\label{down2}
Following \ref{down} and \ref{niceacckaec}, any $\mu$-abstract class from \ref{defmaec} with (3) weakened to the existence of a weak 
L\"owenheim-Skolem-Tarski number $\lambda$
is a $\mu$-AEC with $\LS(\ck)\leq(\lambda^+)^{<\mu}$.

Assuming Vop\v enka's principle, the weak L\"owenheim-Skolem-Tarski number axiom is satisfied by any full subcategory $\ck$ of $\Emb(L)$. This follows from \cite{preacc}
and is related to the unpublished theorem of Stavi (see \cite{magidor}).  
\end{remark}

To summarize, we have so far shown that any reasonably embedded $\kappa$-accessible subcategory of a category of structures $\Emb(L)$ is a $\mu$-AEC. We wish 
to go further, however: given any $\mu$-accessible category whose morphisms are monomorphisms, we claim that it is equivalent---as an abstract category---to 
a $\mu$-AEC, in a sense that we now recall.  

\begin{defin}We say that categories $\cc$ and $\cd$ are \emph{equivalent} if the following equivalent conditions (see \cite{cwm} V.4.1) hold:
\begin{enumerate}\item There is a functor $F:\cc\to\cd$ that is 
\begin{itemize}\item \emph{full}: For any $C_1,C_2$ in $\cc$, the map $f\mapsto F(f)$ is a surjection from $\Hom_{\cc}(C_1,C_2)$ to $\Hom_{\cd}(FC_1,FC_2)$.
\item \emph{faithful}: For any $C_1,C_2$ in $\cc$, the map $f\mapsto F(f)$ is an injection from $\Hom_{\cc}(C_1,C_2)$ to $\Hom_{\cd}(FC_1,FC_2)$. 
\item \emph{essentially surjective}: Any object $D$ in $\cd$ is isomorphic to $F(C)$ for some $C$ in $\cc$.\end{itemize}
\item There are functors $F:\cc\to\cd$ and $G:\cd\to\cc$ such that the compositions $FG$ and $GF$ are naturally isomorphic to the identity functors on $\cd$ and $\cc$, respectively.\end{enumerate}\end{defin}

One might insist that the compositions in condition (2) are in fact equal to the identity functors, but this notion (\emph{isomorphism of categories}) is typically too strong to be of interest---equivalence of categories as described above is sufficient to ensure that a pair of categories exhibit precisely the same properties.  In particular, if $F:\cc\to\cd$ gives an equivalence of categories, it preserves and reflects internal sizes and gives a bijection between the isomorphism classes in $\cc$ and those in $\cd$; thus questions of, e.g., categoricity have identical answers in $\cc$ and $\cd$.

We proceed by constructing, for a general $\mu$-accessible category $\ck$ whose morphisms are monomorphisms, a full, faithful, essentially surjective functor from $\ck$ to $\ck'$, where $\ck'$ is a $\mu$-AEC.  We begin by realizing a $\mu$-accessible category as a category of structures.

\begin{lemma}Let $\ck$ be a $\mu$-accessible category whose morphisms are monomorphisms. There is a unary many-sorted signature $L$ such that $\ck$ is fully embedded to an equational variety in $\Emb(L)$.

Moreover, this full embedding preserves $\mu$-directed colimits.\end{lemma}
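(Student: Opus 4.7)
The plan is to construct a Yoneda-type embedding indexed by the $\mu$-presentable objects of $\ck$. First, I would fix a small full subcategory $\ca$ of $\ck$ containing a representative of every isomorphism class of $\mu$-presentable object; this is possible because $\ck$ is $\mu$-accessible. Define a unary many-sorted signature $L$ with a sort $S_A$ for each $A\in\ca$ and a unary function symbol $\hat f\colon S_B\to S_A$ for each morphism $f\colon A\to B$ of $\ca$. Let $T$ be the equational theory consisting of the identities $\widehat{\id_A}=\id_{S_A}$ and $\widehat{g\circ f}=\hat f\circ\hat g$ for composable $\ca$-morphisms. Models of $T$ are exactly contravariant functors $\ca\to\Set$, so the class $\Mod(T)$ is an equational variety sitting as a full subcategory of $\Emb(L)$.

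Second, I would define the embedding $E\colon\ck\to\Emb(L)$ by setting $S_A^{E(M)}=\Hom_\ck(A,M)$ with $\hat f^{E(M)}(g)=g\circ f$, and by letting $E(h)\colon E(M)\to E(N)$ act on each sort by postcomposition, $g\mapsto h\circ g$. Functoriality of the hom bifunctor shows that each $E(M)$ lies in $\Mod(T)$ and that each $E(h)$ is a natural transformation, hence an $L$-homomorphism. The crucial use of the monomorphism hypothesis enters here: it guarantees that each component of $E(h)$ is injective (this is just the definition of $h$ being a monomorphism tested against objects of $\ca$), so $E(h)$ is actually a substructure embedding and $E$ really factors through $\Emb(L)$.

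Third, full faithfulness of $E$ follows from the density theorem for accessible categories: since $\ck$ is $\mu$-accessible, every $M\in\ck$ is canonically a $\mu$-directed colimit of objects of $\ca$, which implies that the restricted Yoneda functor $M\mapsto\Hom_\ck(-,M)\rest\ca$ from $\ck$ into $[\ca^{op},\Set]$ is fully faithful. Since its image already lies in $\Mod(T)\subseteq\Emb(L)$ by the previous paragraph, we obtain a full embedding $\ck\hookrightarrow\Mod(T)$. For the moreover clause, each $A\in\ca$ is $\mu$-presentable in $\ck$, so $\Hom_\ck(A,-)$ preserves $\mu$-directed colimits; hence $E$ sends $\mu$-directed colimits to colimits computed sortwise in $\Set$, which are exactly the $\mu$-directed colimits (unions along substructure embeddings) in $\Emb(L)$.

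The main obstacle is not any single technical hurdle but rather aligning two perspectives: the classical presentation of a $\mu$-accessible category as the $\mu$-ind-completion of its $\mu$-presentable part, and the concrete signature-with-equations viewpoint. Once density is in hand, what remains is a routine check that the monomorphism hypothesis is precisely the hook needed to pass from natural transformations in $[\ca^{op},\Set]$ to $L$-substructure embeddings in $\Emb(L)$, and that $\mu$-presentability delivers preservation of $\mu$-directed colimits automatically.
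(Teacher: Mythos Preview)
Your proposal is correct and follows essentially the same route as the paper: both use the restricted Yoneda embedding $M\mapsto\Hom_\ck(-,M)\rest_\ca$ into $\Set^{\ca^{op}}$, realize the target as an equational variety in a many-sorted signature with sorts indexed by $\ca$ and unary function symbols indexed by $\ca$-morphisms, and invoke the monomorphism hypothesis to land in $\Emb(L)$. The only difference is packaging: the paper cites \cite[Proposition~2.8]{adamekrosicky} for full faithfulness and preservation of $\mu$-directed colimits, while you unpack this via density and $\mu$-presentability of the objects of $\ca$.
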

\begin{proof} Let $\ca$ be the full subcategory of $\mu$-presentable objects in $\ck$ (technically, we want $\ca$ to be skeletal, which makes it small).  Consider the canonical embedding 
$$E:\ck\to\Set^{\ca^{op}}$$
that takes each $K\in\ck$ to the contravariant functor $\Homk(-,K)\!\!\upharpoonright_{\ca^{op}}$, and each $\ck$-morphism $f:K\to K'$ to the natural transformation $E(f):\Homk(-,K)\to\Homk(-,K)$ given by postcomposition with $f$.  We note that, by Proposition~2.8 in \cite{adamekrosicky}, this functor is fully faithful and preserves $\mu$-directed colimits.  In fact, we may identify the image $\cl_1$ of $\ck$ in $\Set^{\ca^{op}}$ with an equational variety.  Let $L$ be a signature with sorts $\{S_A\,|\,A\in\ca\}$, and with unary function symbols for each morphism in $\ca$, i.e. a function symbol $\bar{f}$ of arity $S_A\to S_B$ for each $\ca$-map $f:B\to A$, subject to certain equations: whenever $h=f\circ g$ in $\ca$, we insist that $\bar{h}=\bar{g}\bar{f}$.  Concretely, the identification is given by a functor $F:\cl_1\to\Emb(L)$ that takes each functor $E(K)=\Homk(-,K)$ to the structure $FE(K)$ with sorts $S_A^{FE(K)}=\Homk(A,K)$ and with each $\bar{f}:S_A\to S_B$ interpreted as the function $\bar{f}^{FE(K)}:\Homk(A,K)\to\Homk(B,K)$ given by precomposition with $f$.  Any morphism $g:K\to K'$ in $\ck$ is first sent to the natural transformation $E(g):\Homk(-,K)\to\Homk(-,K')$ then sent, via $F$, to $FE(g):FE(K)\to FE(K')$, which is given sortwise by postcomposition with $g$, i.e. for any $A\in\ca$ and $f\in S_A^{FE(K)}=\Homk(A,K)$, $g(f)=g\circ f$.  Clearly, morphisms are injective in the image of $\ck$ under $FE$, as they come from monomorphisms in $\ck$, and they trivially reflect relations, meaning that in fact $F:\cl_1\to\Emb(L)$.
\end{proof}

Let $\cl_2$ denote the image of $\ck$ in $\Emb(L)$ under $FE$.  So we have exhibited $\ck$ as a full subcategory of $\Emb(L)$ closed under $\mu$-directed colimits, where $\Sigma$ is a finitary language.  As a result, the induced relation $\ksst$ is simply $\subseteq$, and iso-fullness and repleteness of the embedding are trivial.  There is only one more wrinkle that we need to consider: the presentability rank of structures in the image of $\ck$ in $\Emb(L)$ need not correspond to the cardinality of the union of their sorts---that is, if $U$ denotes the forgetful functor $\Emb(L)\to\Set$, a $\mu$-presentable object $FE(K)$ need not have $|UFE(K)|<\mu$---so the argument in Theorem~\ref{niceacckaec} cannot simply be repeated here.  Still, $U$ can only do so much damage:

\begin{lemma} The functor $U:\cl_2\to\Set$ sends $\mu$-presentable objects to $\nu^+$-presentable objects, where $\nu=\scard{\Mor(\ca)}$.\end{lemma}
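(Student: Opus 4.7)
The plan is to unpack the definition of $UFE(K)$ for a $\mu$-presentable object $K$ and verify directly that the resulting set has cardinality at most $\nu$; in $\Set$ this is equivalent to $\nu^+$-presentability. First, I would reduce to the case that the $\mu$-presentable object under consideration is of the form $FE(K)$ with $K\in\ca$. The functor $FE:\ck\to\cl_2$ is fully faithful by construction, essentially surjective onto $\cl_2$ by the definition of the latter, and preserves $\mu$-directed colimits by the previous lemma; hence it is an equivalence of $\mu$-accessible categories and both preserves and reflects $\mu$-presentability. So the $\mu$-presentable objects of $\cl_2$ are, up to isomorphism, exactly the $FE(K)$ with $K\in\ca$.

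With this reduction in hand, the proof is a short counting argument. By the construction of $FE$, the underlying set (disjoint union of sorts) of $FE(K)$ is
\[
UFE(K) \;=\; \bigsqcup_{A\in\ca} S_A^{FE(K)} \;=\; \bigsqcup_{A\in\ca} \Homk(A,K).
\]
Since $K\in\ca$ and $\ca$ is the \emph{full} subcategory of $\mu$-presentable objects of $\ck$, each $\Homk(A,K)$ coincides with $\Hom_{\ca}(A,K)$. The tagged disjoint union on the right thus injects into $\Mor(\ca)$ via $(A,f)\mapsto f$, yielding $\scard{UFE(K)}\le\scard{\Mor(\ca)}=\nu$. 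Since a set is $\nu^+$-presentable in $\Set$ if and only if its cardinality is at most $\nu$, this finishes the argument.

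There is really no serious obstacle here. The only point worth verifying carefully is that $FE$ reflects $\mu$-presentability, which follows immediately from the fact that it is an equivalence that preserves $\mu$-directed colimits; everything else is bookkeeping made manageable by the smallness of the skeletal category $\ca$.
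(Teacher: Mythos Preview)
Your argument is correct. The paper states this lemma without proof, treating it as immediate from the construction; your write-up supplies exactly the natural justification the authors left implicit, namely that the $\mu$-presentables of $\cl_2$ are the $FE(K)$ for $K\in\ca$ (since $FE$ is an equivalence) and that $UFE(K)=\bigsqcup_{A\in\ca}\Hom_\ca(A,K)$ injects into $\Mor(\ca)$.
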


\begin{theorem}\label{equivthm}Let $\ck$ be a $\mu$-accessible category with all morphisms mono.  Then $\ck$ is equivalent to a $\mu$-AEC with 
L\"owenheim-Skolem-Tarski number $\lambda=\max(\mu,\nu)^{<\mu}$.\end{theorem}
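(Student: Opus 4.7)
The plan is to leverage the preceding lemmas, which realize $\ck$ as a full, iso-full, replete subcategory $\cl_2$ of $\Emb(L)$ via a fully faithful and $\mu$-directed-colimit-preserving embedding $FE$, and then verify that $\cl_2$ is itself a $\mu$-AEC with $\LS(\cl_2) \leq \lambda := \max(\mu, \nu)^{<\mu}$. Since $FE$ is an equivalence $\ck \to \cl_2$, the theorem then follows.

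I would first dispatch the structural and chain axioms. The axioms in Definition \ref{defmac} together with coherence in Definition \ref{defmaec} are immediate from the fullness, iso-fullness, and repleteness of the image of $FE$, together with the observation that $\ksst$ on $\cl_2$ reduces to inclusion. The Tarski--Vaught chain axioms hold because $FE$ preserves $\mu$-directed colimits and such colimits in $\Emb(L)$ are simply unions: any $\mu$-directed system in $\cl_2$ has a colimit in $\ck$ (by $\mu$-accessibility) whose image under $FE$ is the union in $\Emb(L)$ and lies in $\cl_2$; the second clause then falls out of the universal property.

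The substantive step is the L\"owenheim--Skolem--Tarski axiom. First I would verify $\lambda^{<\mu} = \lambda$: writing $\sigma := \max(\mu, \nu)$, for $\gamma < \mu$ one has
\[
(\sigma^{<\mu})^{\gamma} \;=\; \sup_{\alpha < \mu} \sigma^{\alpha \cdot \gamma} \;=\; \sigma^{<\mu}
\]
by regularity of $\mu$. Given $M \in \cl_2$ and $A \subseteq |M|$, set $\kappa := (\scard{A}^{<\mu} + \lambda)^+$. A similar arithmetic check shows $\mu \slesseq \kappa$, and by construction $\kappa \geq \max(\mu, \nu)^+ \geq \scard{L}^+$. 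Invoking \cite{LRmetric} 4.10 and 4.6 yields that $\cl_2$ is $\kappa$-accessible and that the inclusion $\cl_2 \hookrightarrow \Emb(L)$ preserves $\kappa$-presentable objects. Express $M$ as a $\kappa$-directed colimit of its $\kappa$-presentable subobjects $\langle M_i \mid i \in I \rangle$; each $M_i$ is then $\kappa$-presentable in $\Emb(L)$, hence has underlying set of cardinality at most $\scard{A}^{<\mu} + \lambda$. For each $a \in A$ pick $i_a \in I$ with $a \in |M_{i_a}|$, and use $\kappa$-directedness together with $\scard{A} < \kappa$ to find a single $M_j$ in the diagram containing every $M_{i_a}$; then $M_j \ksst M$, $A \subseteq |M_j|$, and $\scard{M_j} \leq \scard{A}^{<\mu} + \lambda$, as required.

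The main obstacle is the preservation step. The lemma preceding this theorem only guarantees that the forgetful functor $U$ sends $\mu$-presentables of $\cl_2$ to sets of cardinality at most $\nu$; bootstrapping this up the accessibility hierarchy to $\kappa$-presentables requires the sharp inequality $\mu \slesseq \kappa$, together with $|L| \leq \nu < \kappa$, which in combination with the standard characterization of $\kappa$-presentability in $\Emb(L)$ for a unary many-sorted signature closes the loop via \cite{LRmetric} 4.6. Once LST is in place, $\cl_2$ is a $\mu$-AEC with $\LS(\cl_2) \leq \lambda$, and the equivalence $\ck \simeq \cl_2$ supplied by $FE$ completes the proof.
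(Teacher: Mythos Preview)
Your approach is essentially correct and runs parallel to the paper's, but the key step is misattributed. You invoke \cite{LRmetric}~4.6 to get that the inclusion $\cl_2\hookrightarrow\Emb(L)$ preserves $\kappa$-presentables, bootstrapping from the fact that $\mu$-presentables land in sets of size at most $\nu$. But in the proof of Theorem~\ref{niceacckaec}, \cite{LRmetric}~4.6 is applied only after one already has exact preservation at the base level $\kappa$ (that was a \emph{hypothesis} there); here the base case is not preservation of $\mu$-presentables but rather ``sends $\mu$-presentables to $\nu^+$-presentables,'' which is a different input. The paper flags exactly this: the sentence preceding the theorem states that ``the argument in Theorem~\ref{niceacckaec} cannot simply be repeated here'' because $\mu$-presentables in $\cl_2$ need not have underlying set of size $<\mu$.

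The paper's own proof avoids this by working directly: given $X\subseteq FE(K)$ of size $\alpha$, it passes to an $(\alpha^{<\mu})^+$-presentable subobject $M_X$ containing $X$, then invokes \cite{makkaiparebook}~2.3.11 to write $M_X$ as an $(\alpha^{<\mu})^+$-small $\mu$-directed colimit of $\mu$-presentables of $\cl_2$. Since each of those has underlying set of size at most $\nu$, the union has size at most $\alpha^{<\mu}+\max(\mu,\nu)$, whence $\lambda=\max(\mu,\nu)^{<\mu}$ works. This is the same mechanism you are reaching for --- indeed, the Makkai--Par\'e decomposition is precisely what would justify your preservation claim --- but the citation should be to \cite{makkaiparebook}~2.3.11 rather than \cite{LRmetric}~4.6. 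With that substitution your argument goes through and is equivalent to the paper's.
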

\begin{proof} Consider $X\subseteq FE(K)$, and let $\alpha=\scard{X}$.  Since $\cl_2$ is $\mu$-accessible, it is $(\alpha^{<\mu})^+$-accessible (provided $\alpha\geq\mu$); see the proof of \ref{niceacckaec}.  Thus there is an $(\alpha^{<\mu})^+$-presentable $\cl_2$-subobject $M_X$ of $FE(K)$ with $X\subseteq M_X$.  By Theorem 2.3.11 in \cite{makkaiparebook}, $M_X$ can be expressed as an $(\alpha^{<\mu})^+$-small $\mu$-directed colimit of $\mu$-presentables in $\cl_2$, meaning that $U(M_X)$ is an $(\alpha^{<\mu})^+$-small $\mu$-directed colimit of sets of size less or equal than $\nu$.  This is of cardinality less or equal than $\alpha^{<\mu}+\max(\mu,\nu)$.  This suggests $\max(\mu,\nu)$ might serve as our L\"owenheim-Skolem-Tarski number, but we must fulfill the requirement that $\lambda^{<\mu}=\mu$.  So, take $\lambda=\max(\mu,\nu)^{<\mu}$.\end{proof}

The $\mu$-AEC from \ref{equivthm} is a full subcategory of $\Emb(L)$ where $L$ is a finitary language.  Although this equivalence destroys both the ambient language and the underlying sets, and thus moves beyond the methods usually entertained in model theory, it allows us to transfer intuition and concepts between the two contexts.  

The equivalence allows us to generate the notion of a L\"{o}wenheim-Skolem number in an accessible concrete category, where concreteness is necessary to form the question.

\begin{prop}
Let $(\K, U)$ be a $\mu$-accessible concrete category with all maps monomorphisms such that $U$ preserves $\mu$-directed colimits.  Then if $M \in \K$ and $X \subset UM$, there is a subobject $M_0 \in \K$ of $M$ such that $X \subset UM_0$ and $M_0$ is $(\scard{X} ^{<\mu})^+$-presentable.
\end{prop}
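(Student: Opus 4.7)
The plan is to follow closely the template used in the L\"owenheim--Skolem--Tarski step of Theorem~\ref{niceacckaec} and at the end of the proof of Theorem~\ref{equivthm}, replacing the forgetful functor appearing there with the abstract $U$. Set $\alpha := \max(\scard{X}, \mu)$, so that $\alpha \geq \mu$ and $\alpha^{<\mu} \geq \scard{X}^{<\mu}$. The goal is to produce $M_0$ of presentability rank at most $(\alpha^{<\mu})^+$ containing $X$ in the appropriate sense; the bound $(\scard{X}^{<\mu})^+$ will then follow.

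First I would upgrade the accessibility of $\K$. Exactly as in the computations of Theorems \ref{niceacckaec} and \ref{equivthm}, one has the sharp inequality $\mu \slesseq (\alpha^{<\mu})^+$, since $\alpha \geq \mu$ gives $(\alpha^{<\mu})^\gamma = \alpha^{<\mu}$ for every $\gamma < \mu$ (by \cite{adamekrosicky} 2.13(4)). Combined with $\mu$-accessibility of $\K$, this yields that $\K$ is $(\alpha^{<\mu})^+$-accessible. Hence $M$ can be expressed as an $(\alpha^{<\mu})^+$-directed colimit $\langle f_i : M_i \to M \rangle_{i \in I}$ in which each $M_i$ is $(\alpha^{<\mu})^+$-presentable.

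Next I would push this through $U$. Since $(\alpha^{<\mu})^+ \geq \mu$, the diagram $\langle M_i \rangle_{i \in I}$ is in particular $\mu$-directed, and the hypothesis that $U$ preserves $\mu$-directed colimits gives $UM = \mathrm{colim}_{i \in I} UM_i$ in $\Set$, computed as the union of the images of the $U(f_i)$. For each $x \in X$, choose $i_x \in I$ with $x \in \mathrm{image}(U(f_{i_x}))$. The set $\{i_x : x \in X\}$ has cardinality at most $\scard{X} \leq \alpha < (\alpha^{<\mu})^+$, so the $(\alpha^{<\mu})^+$-directedness of $I$ produces a single $j \in I$ dominating all the $i_x$. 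Set $M_0 := M_j$. Since every morphism of $\K$ is mono, $f_j : M_0 \to M$ represents $M_0$ as a subobject of $M$, and by composing through $U(f_{i_x}) = U(f_j) \circ U(M_{i_x} \to M_j)$, each $x \in X$ lies in $\mathrm{image}(U(f_j))$; under the usual concrete identification this reads $X \subseteq UM_0$. Finally, $M_0 = M_j$ is $(\alpha^{<\mu})^+$-presentable, and $\alpha^{<\mu} \geq \scard{X}^{<\mu}$ yields the desired bound.

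The only genuinely technical step is the accessibility upgrade, which reuses the sharp-inequality calculation already established earlier in the paper; everything else reduces to a diagram chase combined with the observation that $\mu$-directed colimits in $\Set$ are unions of images. The one subtlety worth flagging is that, even though all $\K$-morphisms are monomorphisms, we have not assumed $U$ carries monos to injections; the identification ``$X \subseteq UM_0$'' should therefore be read as ``$X$ is contained in the image of $U(f_j)$,'' which is the natural reading for a subobject in a concrete category and suffices for the statement.
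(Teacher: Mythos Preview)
Your argument is correct but takes a different route from the paper's. The paper does \emph{not} raise the index of accessibility via the sharp inequality; instead it works directly with the $\mu$-directed decomposition of $M$ into $\mu$-presentable objects $M^i$, selects a subset $I_0$ of the index set of size at most $\scard{X}$ whose images cover $X$, closes $I_0$ to a $\mu$-directed subset $I_1$ of size at most $\scard{X}^{<\mu}$, and takes $M_0$ to be (the image of) the colimit of the restricted diagram. The presentability bound then follows from \cite{adamekrosicky}~1.16: a colimit of at most $\scard{X}^{<\mu}$ many $\mu$-presentable (hence $(\scard{X}^{<\mu})^+$-presentable) objects is $(\scard{X}^{<\mu})^+$-presentable. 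Your approach is shorter because it reuses the sharp-inequality machinery already deployed in Theorems~\ref{niceacckaec} and~\ref{equivthm}; the paper's approach is deliberately more elementary and self-contained, mirroring the classical downward L\"owenheim--Skolem construction (as the paper itself remarks immediately after the proof).

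One minor correction: in your final step you assert that $\alpha^{<\mu}\geq\scard{X}^{<\mu}$ ``yields the desired bound,'' but this inequality points the wrong way---$(\alpha^{<\mu})^+$-presentability is a priori \emph{weaker} than $(\scard{X}^{<\mu})^+$-presentability when $\alpha^{<\mu}>\scard{X}^{<\mu}$. What saves you is that for $\scard{X}\geq 2$ one in fact has equality $\alpha^{<\mu}=\scard{X}^{<\mu}$ (using $\mu\leq 2^{<\mu}$ and $(2^{<\mu})^{<\mu}=2^{<\mu}$); you should state this rather than the one-sided inequality.
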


Note that we have proved that every $\mu$-accessible category with all maps monomorphism has such a concrete functor:  by Theorem \ref{equivthm}, there is a (full and faithful) equivalence $F: \K \to \K'$, where $\K'$ is some $\mu$-AEC.  The universe functor $U:\K' \to \Set$ if faithful and preserves $\mu$-directed colimits, so $FU: \K \to \K'$ does as well.

\begin{proof} 

Let $M \in \K$, where $\K$ is $\mu$-accessible and concrete with monomorphisms.  Let $X \subset UM$.  We want to find $M_0 \leq M$ with $X \subset UM_0$ that is $(\scard{X}^{<\mu})^+$-presentable.  By accessibility, we can write $M$ as a $\mu$-directed colimit $\seq{M^i, f_{j, i} \mid j < i \in I}$ where $M^i \in \K$ is $\mu$-presentable, $I$ is $\mu$-directed, and $f_{i, \infty}$ are the colimit maps.  

Because $U$ preserves $\mu$-directed colimits, there is $I_0 \subset I$ of size $\leq \scard{X}$ such that, for every $x \in X$, there is some $i_x \in I_0$ such that $x \in Uf_{i_x,\infty}M^i$. Close this to a $\mu$-directed subset $I_1 \subset I$ of size $\leq \scard{X}^{<\mu}$ and let $(M^*, f_{i, *})$ be the colimit of $\{M^i, f_{j, i} \mid j<i \in I_1\}$.  Since this system also embeds into $M$, there is a canonical map $f^*:M^* \to M$.  Set $M_0 = f^*M^*$.  Then $M_0$ is a subobject of $M$ and $X \subset UM_0$, so we just need to show $M_0$ is $(\scard{X}^{<\mu})^+$-presentable.  This follows from \cite{adamekrosicky} 1.16: since $\mu \leq (\scard{X}^{<\mu})^+$, each $M^i$ is $(\scard{X}^{<\mu})^+$ presentable.  Since $\scard{I_1} \leq \scard{X}^{<\mu}$, $M_0$ is $(\scard{X}^{<\mu})^+$-presentable by the cited result.

%Suppose that $g: M_0 \to N$, where $(N, g_{j, \infty})$ is the colimit of the $\scard{X}^{<\mu}$-directed system $\{N_j, g_{i, j} \mid  i<j \in J\}$; we want to show that $g$ factors through some $N_j$.  For each $i \in I_1$, the embedding $g\circ f_{i, \infty}$ of $M^i$ into $N$ factors through this system since $\mu \leq (\scard{X}^{<\mu})^+$, so there is $j(i) \in J$ and $g_i^* :M^i \to N_{j(i)}$ such that $g \circ f_{i, \infty} = g_{j(i), \infty} \circ g_i^*$.  Since $J$ is $\scard{I_1}^+$-directed, there is some $j_* \in J$ that is above every $j(i)$.  This gives a system of embeddings $g_{j(i), j_*} \circ g_i^* : M^i \to N_{j_*}$ for $i \in I_1$.  Moreover, this system commutes, i. e., 
%$$g_{j(i), j_*} \circ g_i^* = g_{j(i'), j_*} \circ g_{i'}^* \circ f_{i, i'}$$
%The proof of this is a diagram chase using the fact that $g_{j_*, \infty}$ is a monomorphism.  Since $M_0$ is the colimit of the system of $M^i$'s, we get $g_{j_*}: M_0 \to N_{j_*}$, as desired.

\end{proof}

Although the previous theorem doesn't use any model theoretic properties directly, it is inspired by standard proofs of the downward L\"{o}wenheim-Skolem theorem and seems not to have been known previously.

Going the other direction, knowledge about accessible categories allows us to show that $\mu$-AECs do not, in general, admit Ehrenfeucht-Mostowski constructions.  In particular, not every $\mu$-AEC $\ck$ admits a faithful functor $E:\Lin\to\ck$:

\begin{exam}\label{no-em-model}

\em{Let $\ck$ be the category of well-ordered sets and order-preserving injections.  By \cite{adamekrosicky} 2.3(8), $\ck$ is $\omega_1$-accessible, and clearly all of its morphisms are monomorphisms.  By Theorem~\ref{equivthm}, it is therefore equivalent to an $\omega_1$-AEC.  As $\ck$ is isomorphism rigid---that is, it contains no nonidentity isomorphisms---it cannot admit a faithful functor from $\Lin$, which is far from isomorphism rigid.}\end{exam}

Ehrenfeucht-Mostowski constructions are a very powerful tool in the study of AECs (see for example \cite{sh394}). This suggests that $\mu$-AECs may be too general to support a robust classification theory.  In particular, the lack Ehrenfeucht-Mostowski models, in turn, means that there is no analogue of the Hanf number that has proven to be very useful in the study of AECs.  

A possible substitute to the notion of Hanf number is that of \emph{LS-accessibility}, which was introduced by \cite{bekerosicky}.  Rather than looking at the cardinality of the models, they asked about the internal size, as computed in the category.  The shift stems from the following: it is clear that there are $\aleph_1$-AECs that don't have models in arbitrarily large cardinalities: looking at complete (non-discrete) metric spaces or \cite[Example 4.8]{bekerosicky}, there can be no models in cardinalities satisfying $\lambda < \lambda^\omega$.  However, the internal size based on presentability rank mentioned above gives that, e. g., complete metric spaces have models of all \emph{sizes}.  Thus, an accessible category is called LS-accessible iff there is a threshold such that there are object of every \emph{size} above that threshold.  Beke and Rosicky \cite{bekerosicky} ask if every large accessible category is LS-accessible.  This question is still open and a positive answer (even restricting to accessible categories where all maps are mono) would aid the analysis of $\mu$-AECs (see the discussion at the start of Section \ref{classthysect}).  

Still, under the additional assumption of upper bounds for increasing chains of structures---\emph{directed bounds}, in the language of \cite{rosickyjsl}, or the \emph{$\delta$-chain extension property}, defined below---we can rule out Example~\ref{no-em-model}, and begin to develop a genuine classification theory.

\section{Tameness and large cardinals}\label{tamenesssect}

In \cite{tamelc-jsl}, it was shown by the first author that, assuming the existence of large cardinals, every AEC satisfies the important locality property know as tameness.  Tameness was isolated (from an argument of Shelah \cite{sh394}) by Grossberg and VanDieren in \cite{tamenessone}, and was used to prove an upward categoricity transfer from a successor cardinal in \cite{tamenesstwo, tamenessthree}. Tame AECs have since been a very productive area of study. For example, they admit a well-behaved notion of independence \cite{ss-tame-toappear-v3, indep-aec-v5} and many definitions of superstability can be shown to be equivalent in the tame context \cite{gv-superstability-v2}. 

In this section, we generalize Boney's theorem to $\mu$-AECs (in a sense, this also partially generalizes the recent \cite{boney-zambrano-tamelc-v1} which proved an analogous result for metric AECs, but for a stronger, metric specialization of tameness). We start by recalling the definition of tameness (and its generalization: full tameness and shortness) to this context. This generalization already appears in \cite[Definition 2.21]{sv-infinitary-stability-v5}.

\begin{defin}[Definitions 3.1 and 3.3 in \cite{tamelc-jsl}]\label{shortness-def}
  Let $\K$ be an abstract class and let $\kappa$ be an infinite cardinal.

  \begin{enumerate}
    \item $\K$ is \emph{$(<\kappa)$-tame} if for any distinct $p, q \in \gS (M)$, there exists $A \subseteq \scard{M}$ such that $|A| < \kappa$ and\footnote{We use here Galois types over sets as defined in \cite[Definition 2.16]{sv-infinitary-stability-v5}.}  $p \rest A \neq q \rest A$.
    \item $\K$ is \emph{fully $(<\kappa)$-tame and short} if for any distinct $p, q \in \gS^{\alpha} (M)$, there exists $I \subseteq \alpha$ and $A \subseteq |M|$ such that $\scard{I} + \scard{A} < \kappa$ and $p^I \rest A \neq q^I \rest A$.
    \item We say $\K$ is \emph{tame} if it is $(<\kappa)$-tame for some $\kappa$, similarly for fully tame and short.
  \end{enumerate}
\end{defin}

Instead of strongly compact cardinals, we will (as in \cite{lc-tame} and \cite{btr}) use \emph{almost} strongly compact cardinals:

\begin{defin}
  An uncountable limit cardinal $\kappa$ is \emph{almost strongly compact} if for every $\mu < \kappa$, every $\kappa$-complete filter extends to a $\mu$-complete ultrafilter.
\end{defin}

Note that the outline here follows the original model theoretic arguments of \cite{tamelc-jsl}.  The category theoretic arguments of \cite{LRclass} and \cite{btr} can also be used.

A minor variation of the proof of \L o\'{s}'s theorem for $L_{\kappa, \kappa}$ (see \cite[Theorem 3.3.1]{Dic}) gives:

\begin{fact}\label{ultraprod-fact}
  Let $\kappa$ be an almost strongly compact cardinal. Let $\mu < \kappa$, let $(M_i)_{i \in I}$ be $L$-structures, and let $U$ be a $\mu^+$-complete ultrafilter on $I$. Then for any formula $\phi \in L_{\mu, \mu}$, $\prod M_i\backslash U \models \phi[[f]_U]$ if and only if $M_i \models \phi[f (i)]$ 
for $U$-almost all $i \in I$.
\end{fact}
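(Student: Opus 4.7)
The plan is to prove Fact \ref{ultraprod-fact} by a straightforward induction on the complexity of the formula $\phi \in L_{\mu,\mu}$, mimicking the classical proof of \L o\'s's theorem but substituting the $\mu^+$-completeness of $U$ for the usual finite intersection step. Observe that the almost strong compactness of $\kappa$ plays no direct role in the proof itself: it is only needed in applications to guarantee the existence of sufficiently complete ultrafilters. For the \L o\'s-type equivalence one only uses that $U$ is $\mu^+$-complete (closed under intersections of at most $\mu$ many sets), which comfortably accommodates the fewer than $\mu$ conjuncts and quantifier variables permitted in $L_{\mu,\mu}$.

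First, I would set up $\prod_{i \in I} M_i / U$ in the usual way, noting that atomic formulas are handled immediately from the definition of the interpretations on equivalence classes (the case of an $n$-ary relation symbol for $n < \mu$ requires no completeness assumption, only that its extension set be measured by $U$). Negation is handled by the ultrafilter property. For a conjunction $\bigwedge_{\alpha < \beta} \phi_\alpha$ with $\beta < \mu$, the induction hypothesis supplies, in the forward direction, sets $A_\alpha \in U$ on which $\phi_\alpha$ holds pointwise; since $\beta < \mu \le \mu$, the intersection $\bigcap_{\alpha < \beta} A_\alpha$ remains in $U$ by $\mu^+$-completeness, yielding a single set on which $\bigwedge_\alpha \phi_\alpha$ holds. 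The reverse direction is immediate, and disjunctions are dualized via negation.

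The quantifier case is where the bookkeeping is most delicate. For $\exists \bar{y}\,\phi(\bar{x},\bar{y})$ with $\ell(\bar{y}) = \beta < \mu$: in the forward direction, extract witnesses $[g_\alpha]_U$ for $\alpha < \beta$ from a satisfying assignment in the ultraproduct, apply the induction hypothesis to obtain sets in $U$ on which $M_i \models \phi[f(i), (g_\alpha(i))_\alpha]$, and intersect these $\beta < \mu$ sets using $\mu^+$-completeness. In the reverse direction, if the existential holds on a set $A \in U$ fiberwise, invoke the axiom of choice to pick witnesses $g_\alpha(i)$ for each $i \in A$ and extend arbitrarily off $A$; the tuple $([g_\alpha]_U)_{\alpha < \beta}$ then witnesses the existential in the ultraproduct by induction. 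Universal quantifiers are dualized. The only potential obstacle is the repeated verification that all intersections of $<\mu$ sets in $U$ remain in $U$, which is exactly the content of $\mu^+$-completeness; beyond this, the argument is purely routine.
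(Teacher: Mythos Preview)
Your approach is correct and is exactly the standard argument the paper has in mind: the paper does not give a proof of this Fact at all, merely remarking that it is ``a minor variation of the proof of \L o\'s's theorem for $L_{\kappa,\kappa}$'' and citing Dickmann. Your induction on formula complexity, using $\mu^+$-completeness at the conjunction and quantifier steps, is precisely that variation.

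Two small points worth tightening. First, your claim that the atomic case ``requires no completeness assumption'' is not quite right if the language $L$ is $\mu$-ary (as it is in the intended applications): to show that the interpretation of an $\alpha$-ary relation or function symbol, $\alpha<\mu$ infinite, is well-defined on $U$-equivalence classes, one must intersect $\alpha$ many sets of the form $\{i : f_\gamma(i)=g_\gamma(i)\}$, which already uses $\mu^+$-completeness. Second, in the forward direction of the existential step, once you have chosen witnesses $(g_\alpha)_{\alpha<\beta}$ in the ultraproduct, the induction hypothesis applied to the single formula $\phi$ with assignment $([f]_U,([g_\alpha]_U)_\alpha)$ yields a \emph{single} set in $U$; there is no need to intersect $\beta$ sets at that point. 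The intersection of $<\mu$ sets is rather needed (as above) to check well-definedness of infinite tuples of equivalence classes. Neither point affects the overall correctness of your outline.
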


Using the presentation theorem, we obtain \L o\'{s}'s theorem for $\mu$-AECs:

\begin{lemma}
  Let $\K$ be a $\mu$-AEC. Let $(M_i)_{i \in I}$ be models in $\K$ and let $U$ be a $\left(2^{\LS (\K)}\right)^+$-complete ultrafilter on $I$. Then 
  $\prod M_i\backslash U \in \K$.
\end{lemma}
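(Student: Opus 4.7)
My plan is to adapt the classical presentation-theorem approach to Łoś. First, apply Theorem \ref{pres-thm} to obtain an expansion language $L_1 \supseteq \lan(\K)$, an $(L_1)_{\mu,\mu}$-theory $T_1$ with $\scard{T_1} \le \chi := \LS(\K)$, and a set $\Gamma$ of $\mu$-ary $(L_1)_{\mu,\mu}$-types with $\scard{\Gamma} \le 2^\chi$ such that $\K = \PC^\mu(T_1,\Gamma,\lan(\K))$. For each $i \in I$, fix an expansion $M_i^+$ of $M_i$ to $L_1$ witnessing that $M_i \in \PC^\mu(T_1,\Gamma,\lan(\K))$, so $M_i^+ \models T_1$ and $M_i^+$ omits every type in $\Gamma$. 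Form the ultraproduct $N^+ := \prod_i M_i^+/U$ as an $L_1$-structure. Since the reduct functor commutes with ultraproducts, $N^+ \rest \lan(\K) = \prod_i M_i / U$, so it suffices to show $N^+ \in \EC^\mu(T_1,\Gamma)$.

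To see $N^+ \models T_1$: $T_1 \subseteq (L_1)_{\mu,\mu}$, and $U$ is $(2^\chi)^+$-complete, hence $\mu^+$-complete (as $\mu \le \chi$), so Fact \ref{ultraprod-fact} applies and each sentence of $T_1$ transfers to $N^+$. For omission of $\Gamma$, suppose towards contradiction that some $[f]_U \in N^+$ realizes some $p \in \Gamma$. For each formula $\phi(\bx) \in p$, the Łoś condition gives $A_\phi := \{i \in I : M_i^+ \models \phi[f(i)]\} \in U$. The key counting step is to bound $\scard{p}$: each $p \in \Gamma$ is a set of $(L_1)_{\mu,\mu}$-formulas, and since $\scard{L_1} \le \chi$ and $\chi = \chi^{<\mu}$ by the Löwenheim-Skolem-Tarski axiom, a routine induction on formula complexity yields $\scard{(L_1)_{\mu,\mu}} \le \chi$, and in particular $\scard{p} \le \chi < (2^\chi)^+$. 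The $(2^\chi)^+$-completeness of $U$ then gives $\bigcap_{\phi \in p} A_\phi \in U$; picking any $i$ in this intersection, $f(i)$ would realize $p$ in $M_i^+$, contradicting the choice of $M_i^+$.

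Hence $N^+$ omits every $p \in \Gamma$, so $N^+ \in \EC^\mu(T_1,\Gamma)$ and $\prod_i M_i / U = N^+ \rest \lan(\K) \in \K$. The main obstacle is really only bookkeeping: one must verify that the size of each type in $\Gamma$ is genuinely controlled by $(2^\chi)^+$, which is where the Löwenheim-Skolem-Tarski closure condition $\chi^{<\mu} = \chi$ is invoked to bound the number of $(L_1)_{\mu,\mu}$-formulas. The $\mu^+$-completeness suffices for satisfaction of $T_1$, but the stronger $(2^\chi)^+$-completeness is exactly what is needed to omit all types in $\Gamma$ simultaneously.
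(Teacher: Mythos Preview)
Your argument is correct and follows essentially the same route as the paper's sketch: both apply the presentation theorem, expand each $M_i$, and invoke \L o\'s for sufficiently complete ultrafilters---the paper simply absorbs $T_1$ together with the negations of the types in $\Gamma$ into a single $L'_{(2^\chi)^+,(2^\chi)^+}$-sentence and then appeals to Fact~\ref{ultraprod-fact} and \cite[Theorem~4.3]{tamelc-jsl}, whereas you handle satisfaction of $T_1$ and omission of $\Gamma$ separately. One small caveat: the blanket claim $\scard{(L_1)_{\mu,\mu}}\le\chi$ is delicate, since infinitary formulas may have arbitrarily large ordinal rank; but the types produced in Theorem~\ref{pres-thm} are \emph{quantifier-free} (each is essentially the atomic diagram of a structure of size $\le\chi$), so $\scard{p}\le\chi<(2^\chi)^+$ holds for every $p\in\Gamma$ that actually arises, and your intersection argument goes through unchanged.
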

\begin{proof}[Proof sketch]
  Let $\mu := \left(2^{\LS (\K)}\right)^+$. By the presentation theorem (Theorem \ref{pres-thm}), there exists a language $L' \supseteq L (\K)$ and a sentence $\phi \in L_{\mu, \mu}'$ such that $\K = \Mod (\phi) \rest L = \K$. Now use Fact \ref{ultraprod-fact} together with the proof of \cite[Theorem 4.3]{tamelc-jsl}.
\end{proof}

All the moreover clauses of \cite[Theorem 4.3]{tamelc-jsl} are also obtained, thus by the same proof as \cite[Theorem 4.5]{tamelc-jsl}, we get:

\begin{theorem}\label{tamelc-muaecs}
  Let $\K$ be a $\mu$-AEC and let $\kappa > \LS (\K)$ be almost strongly compact. Then $\K$ is fully $(<\kappa)$-tame and short.
\end{theorem}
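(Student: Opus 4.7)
The plan is to adapt Boney's original AEC argument \cite[Theorem 4.5]{tamelc-jsl} to the $\mu$-AEC setting, using the Łoś-style lemma for $\mu$-AECs established just above as the main technical input. I would argue the contrapositive: assuming distinct $p, q \in \gS^\alpha(M)$ agree on every small restriction, i.e.\ $p^I \rest A = q^I \rest A$ for all $I \subseteq \alpha$ and $A \subseteq \scard{M}$ with $\scard{I} + \scard{A} < \kappa$, I will deduce $p = q$ and obtain a contradiction.

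First, I would realize the types as $p = \text{gtp}(\bar a / M; N_1)$ and $q = \text{gtp}(\bar b / M; N_2)$ in the usual way. The small-restriction hypothesis produces, for each pair $s = (I_s, A_s)$ with $\scard{I_s} + \scard{A_s} < \kappa$, a $\K$-amalgam $N_s$ together with $\K$-embeddings $f_s : N_1 \to N_s$ and $g_s : N_2 \to N_s$ that fix $A_s$ pointwise and satisfy $f_s(a_i) = g_s(b_i)$ for all $i \in I_s$. On the index set $S$ of all such pairs, the sets $X_{s_0} := \{s \in S \mid I_{s_0} \subseteq I_s,\ A_{s_0} \subseteq A_s\}$ generate a filter that is $\kappa$-complete: any intersection of fewer than $\kappa$ such sets contains the pair obtained by taking the union of the indices, and this union is still of size $< \kappa$ because $\kappa$ is a limit cardinal. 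Almost strong compactness of $\kappa > \LS(\K)$ then extends this to a $\bigl(2^{\LS(\K)}\bigr)^+$-complete ultrafilter $U$ on $S$.

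Form the ultraproduct $N_\ast := \prod_{s \in S} N_s / U$, which lies in $\K$ precisely by the Łoś-style lemma (which requires exactly the completeness we arranged). The maps $\hat f := [s \mapsto f_s]_U$ and $\hat g := [s \mapsto g_s]_U$ define $\K$-embeddings $N_1 \to N_\ast$ and $N_2 \to N_\ast$, and $\kappa$-completeness of $U$ guarantees that both restrict to the diagonal embedding on $M$ (since for each $m \in M$, the set of $s$ with $m \in A_s$ lies in $U$) and that $\hat f(a_i) = \hat g(b_i)$ for every $i < \alpha$ (since for each $i$, the set of $s$ with $i \in I_s$ lies in $U$). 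This amalgam witnesses $p = q$, contradicting distinctness. The full tameness version of Definition \ref{shortness-def} falls out of the same argument with $\alpha = 1$ and $I$ trivial.

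The main obstacle is ensuring that the ultraproduct returns to $\K$ and that the induced maps really are $\K$-embeddings preserving the necessary pointwise-fixed data; this is exactly what the Łoś-style lemma encodes. Once one accepts that lemma and the filter-completeness calculation above, the remaining bookkeeping on ultraproducts of embeddings transfers verbatim from the AEC proof in \cite{tamelc-jsl}.
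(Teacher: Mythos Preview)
Your proposal is correct and is precisely the argument the paper intends: the paper's own proof simply invokes the \L o\'s-style lemma for $\mu$-AECs and then says ``by the same proof as \cite[Theorem~4.5]{tamelc-jsl},'' which is exactly the ultraproduct-of-amalgams construction you have written out. The only point worth double-checking is that $(2^{\LS(\K)})^+<\kappa$ so that almost strong compactness yields the required completeness, but this is the same implicit step the paper (and \cite{tamelc-jsl}) relies on.
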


In particular, if there is a proper class of almost strongly compact cardinals, every $\mu$-AEC is fully tame and short.  Using the recent converse for the special case of AECs due to Boney and Unger \cite{lc-tame}, we obtain also a converse in $\mu$-AECs:

\begin{theorem}
  The following are equivalent:

  \begin{enumerate}
    \item\label{equiv-1} For every $\mu$, every $\mu$-AEC is fully tame and short.
    \item\label{equiv-2} Every AEC is tame.
    \item\label{equiv-3} There exists a proper class of almost strongly compact cardinals.
  \end{enumerate}
\end{theorem}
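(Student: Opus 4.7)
The strategy is to show the cycle $(\ref{equiv-3}) \Rightarrow (\ref{equiv-1}) \Rightarrow (\ref{equiv-2}) \Rightarrow (\ref{equiv-3})$, where the first two implications are essentially free from what has already been established and the last one is imported from the literature.

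For $(\ref{equiv-3}) \Rightarrow (\ref{equiv-1})$, I would just apply Theorem \ref{tamelc-muaecs} directly. Given any $\mu$-AEC $\K$, assumption (\ref{equiv-3}) supplies an almost strongly compact cardinal $\kappa > \LS (\K)$; Theorem \ref{tamelc-muaecs} then yields that $\K$ is fully $(<\kappa)$-tame and short, hence fully tame and short in the sense of Definition \ref{shortness-def}.

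For $(\ref{equiv-1}) \Rightarrow (\ref{equiv-2})$, I would invoke the observation (recorded in the list of examples following Remark \ref{rmk-mu-aec}) that every AEC is, tautologically, an $\aleph_0$-AEC with the same Löwenheim-Skolem number and the same underlying category, so Galois types in the two senses coincide. Applying (\ref{equiv-1}) with $\mu = \aleph_0$ gives full tameness and shortness, and restricting Definition \ref{shortness-def} from $\alpha$-types to $1$-types (taking $I = \alpha = 1$) immediately yields tameness in the usual AEC sense.

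The implication $(\ref{equiv-2}) \Rightarrow (\ref{equiv-3})$ is the deep direction and is exactly the main theorem of Boney and Unger \cite{lc-tame}, which was cited just before the statement; so I would simply quote it. This is also where the main obstacle of the theorem lies, but since the result is already in the literature there is nothing further to do here; the role of the present theorem is to package it together with Theorem \ref{tamelc-muaecs} to extend the equivalence from AECs to $\mu$-AECs. The only thing to double-check in writing this up is that the trivial implication $(\ref{equiv-1}) \Rightarrow (\ref{equiv-2})$ really does go through without any subtlety about how Galois types over sets in a $\mu$-AEC specialize to Galois types in an AEC when $\mu = \aleph_0$, which is immediate from the definitions in \cite[Definition 2.16]{sv-infinitary-stability-v5} referenced above.
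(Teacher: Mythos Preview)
Your proposal is correct and matches the paper's proof essentially verbatim: the paper also cycles through $(\ref{equiv-3}) \Rightarrow (\ref{equiv-1})$ via Theorem \ref{tamelc-muaecs}, $(\ref{equiv-1}) \Rightarrow (\ref{equiv-2})$ via the observation that AECs are $\aleph_0$-AECs, and $(\ref{equiv-2}) \Rightarrow (\ref{equiv-3})$ by quoting \cite{lc-tame}. There is nothing to add.
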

\begin{proof}
  (\ref{equiv-1}) implies (\ref{equiv-2}) is because AECs are $\aleph_0$-AECs. (\ref{equiv-2}) implies (\ref{equiv-3}) is \cite{lc-tame} and (\ref{equiv-3}) implies (\ref{equiv-1}) is Theorem \ref{tamelc-muaecs}.
\end{proof}

\section{On categorical $\mu$-AECs}\label{classthysect}

Here we show that some non-trivial theorems of classification theory for AECs transfer to $\mu$-AECs and, by extension, accessible categories with monomorphisms. Most of the classification theory for AECs has been driven by Shelah's categoricity conjecture\footnote{For more references and history, see the introduction of Shelah's book \cite{shelahaecbook}}. For an abstract class $\K$, we write $I(\lambda,\K)$ for the number of pairwise non-isomorphic models of $\K$ of cardinality $\lambda$. An abstract class $\K$ is said to be \emph{categorical in $\lambda$} if $I(\lambda,\K) = 1$. Inspired by Morley's categoricity theorem, Shelah conjectured:

\begin{conj}\label{shconj}
  If an AEC is categorical in a high-enough cardinal, then it is categorical on a tail of cardinals.
\end{conj}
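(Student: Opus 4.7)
This is Shelah's categoricity conjecture, so any serious plan has to confront the fact that most existing progress proceeds under auxiliary hypotheses (amalgamation, tameness, a successor cardinal). The plan is therefore organized in three stages: extract structural properties from categoricity, set up a superstability-like independence calculus, and then run the transfer. Fix an AEC $\K$ categorical in some $\lambda > \beth_{(2^{\LS(\K)})^+}$ (so that Hanf-number-style arguments apply); the goal is to show $\K$ is categorical in every sufficiently large cardinal.

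First, I would harvest the usual structural consequences of categoricity in a sufficiently large cardinal. Shelah's presentation theorem together with the Hanf number bound from \cite{baldwinbook} gives arbitrarily large models, and the no-order-property arguments in cardinals above the Hanf number yield stability below $\lambda$ in many cardinals. The key reduction is to obtain amalgamation, joint embedding, and no maximal models on a tail of cardinals below $\lambda$; here one would bootstrap from categoricity via the usual Shelah-style EM-model analyses, which are available because $\K$ is an ordinary AEC (not a proper $\mu$-AEC), so Ehrenfeucht-Mostowski functors exist. The hardest sub-step is upgrading the class to be tame on some interval $[\LS(\K), \lambda)$: one route is to prove that categoricity in a sufficiently large cardinal implies tameness outright (via locality properties of Galois types in the categoricity model, following the Grossberg-VanDieren-Vasey program); another is to use large cardinal axioms and appeal to Theorem \ref{tamelc-muaecs}, but for a truly ZFC result the first route is required.

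Second, once amalgamation and tameness are available on a suitable interval, I would construct a well-behaved independence relation. The plan is to define a frame-like forking notion using the approach of \cite{indep-aec-v5} and \cite{ss-tame-toappear-v3}, show symmetry, extension, uniqueness, and local character on the relevant cardinals, and then derive a notion of superstability equivalent to one of the standard formulations from \cite{gv-superstability-v2}. Crucially, one needs to propagate this frame across the whole tail, using tameness to lift local independence to global independence; this is where the modern categoricity transfer machinery lives.

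Third, having superstability and good frames on a tail, I would run an upward and downward categoricity transfer. Downward transfer to reach a base cardinal $\lambda_0$ uses the uniqueness of limit models above any cardinal where the class is superstable, combined with the categoricity in $\lambda$ to propagate uniqueness of saturated models downward via the frame. Upward transfer from $\lambda_0$ to every $\mu \geq \lambda_0$ then proceeds by showing, cardinal by cardinal, that the unique saturated model in $\lambda_0$ has arbitrarily large saturated elongations, and that every model of a given size is saturated (using superstability and tameness). The main obstacle, which is precisely why the conjecture is still open in full ZFC generality, is the first stage: extracting tameness (equivalently, an absolute locality statement for Galois types) from categoricity alone, without any large cardinal assumption and without restricting to successor categoricity cardinals. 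Any successful attack on the conjecture will have to produce a new tool for turning a global structural hypothesis like categoricity in a single large cardinal into a local syntactic-style property like tameness across a whole interval.
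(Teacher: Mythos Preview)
The statement you are attempting to prove is a \emph{conjecture}, and the paper treats it as such: it is stated as Conjecture~\ref{shconj}, with no proof, and the surrounding discussion explicitly poses the analogous questions for $\mu$-AECs and accessible categories as open (Questions~\ref{pbacc} and~\ref{pbaec}). There is no proof in the paper to compare your proposal against.

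Your write-up is not really a proof but a survey of the known partial approaches (presentation theorem and Hanf number, EM models, amalgamation, frames and superstability, tameness-based transfer \`a la Grossberg--VanDieren--Vasey), and you yourself correctly identify the genuine gap: nobody knows how to extract tameness, or any comparable locality statement for Galois types, from categoricity in a single cardinal in ZFC alone without restricting to successor cardinals or invoking large cardinals. That is precisely why the conjecture is open; your Stage~1 is not a plan but a restatement of the problem. So as a proof proposal this fails at the step you already flagged as ``the hardest sub-step'': you have no mechanism for it, only the observation that one route would require proving it outright. Everything downstream of that step (Stages~2 and~3) is conditional on hypotheses the conjecture does not provide.
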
 

Naturally, one can ask the same question for both $\mu$-AECs and accessible categories, where, following \cite{rosickyjsl}, we say an accessible category is \emph{categorical in $\lambda$} if it contains exactly one object of internal size $\lambda$ (up to isomorphism).  By shifting the question to these more general frameworks, of course, we make it more difficult to arrive at a positive answer.  If the answer is negative, on the other hand, counterexamples should be more readily available in our contexts: if indeed the answer is negative, this would give us a bound on the level of generality at which the categoricity conjecture can hold.

\begin{question} \label{pbacc}
If a large accessible category (whose morphisms are monomorphisms) is categorical in a high-enough cardinal, is it categorical
on a tail of cardinals?
\end{question}

 A negative answer to the question of Beke and Rosicky from Section \ref{access-sec}---an example of an large accessible category $\ck$ with arbitrarily large gaps in internal sizes---would yield a negative answer to Question \ref{pbacc}: as noted in \cite{bekerosicky} 6.3, it suffices to take the coproduct $\ck\coprod\Set$. This adds exactly one isomorphism class to each size, resulting in a category that is (internally) categorical in arbitrarily high cardinals---the gaps of $\ck$---but also fails to be (internally) categorical in arbitrarily large cardinals. By taking injective mappings of sets, one can do the same for large accessible categories whose morphisms are monomorphisms. \cite{bekerosicky} and \cite{LRclass} contain sufficient conditions for LS-accessibility: in particular, it is enough to add the assumption of the existence of arbitrary directed colimits (see \cite{LRclass}, 2.7).

For $\mu$-AECs, the natural formulation is in terms not of the internal size, but of the cardinality of underlying sets.  Some adjustments have to be made, as a $\mu$-AEC need not have a model of cardinality $\lambda$ when $\lambda^{<\mu} > \lambda$, and thus eventual categoricity would fail more or less trivially.   

\begin{question}\label{pbaec}
  If a $\mu$-AEC is categorical in a high-enough cardinal $\lambda$ with $\lambda = \lambda^{<\mu}$, is it categorical in all sufficiently high $\lambda'$ such that $\lambda' = (\lambda')^{<\mu}$.
\end{question} 

For $\mu=\omega$, this question reduces to \ref{shconj}.

\begin{remark}
We will show that a positive answer to Question~\ref{pbacc}, the internal version, implies, at the very least, a positive answer to Question~\ref{shconj}. Let $\ck$ be an AEC in a language $L$. 
Then $\ck$ is an accessible category and, following \cite{bekerosicky} 4.1, 4.3 and 3.6, there is a regular cardinal $\kappa$ such that $\ck$
is $\kappa$-accessible and $E$ preserves sizes $\lambda\geq\kappa$. We can assume that, in $\Emb(L)$, they coincide with cardinalities of underlying sets.
Thus, any $K_1,K_2$ with sufficiently large and distinct $\mcard{EK_1},\mcard{EK_2}$ have distinct sizes $\mcard{K_1},\mcard{K_2}$  and thus $K_1$ and $K_2$ are not isomorphic.

At present we do not know whether a positive answer to \ref{pbacc} implies a positive answer to \ref{pbaec}.
\end{remark}

Of course \ref{pbacc} is currently out of reach, as is \ref{pbaec}. We are not sure about the truth value of either one: it is plausible that there are counterexamples. A possible starting point for \ref{pbaec} would be to use Theorem \ref{tamelc-muaecs} to try to generalize \cite{tamelc-jsl} to $\mu$-AECs categorical in an appropriate successor above a strongly compact (see also \cite{sh1019}, which proves some model-theoretic results for classes of models of $L_{\kappa, \kappa}$ with $\kappa$ a strongly compact cardinal).

We show here that some facts which follow from categoricity in AECs also follow from categoricity in $\mu$-AECs. As in \cite{rosickyjsl}, which considers categoricity in accessible categories with directed bounds (and, ultimately, directed colimits), we have to add the following hypothesis:

\begin{defin}
  Let $\delta$ be an ordinal. An abstract class $\K$ has the \emph{$\delta$-chain extension property} if for every chain $\seq{M_i : i < \delta}$, there exists $M_\delta \in \K$ such that $M_i \lea M_\delta$ for all $i < \delta$. We say that $\K$ has the \emph{chain extension property} if it has the $\delta$-chain extension property for every limit ordinal $\delta$.
\end{defin}
\begin{remark}
  If $\K$ is a $\mu$-AEC, then $\K$ has the chain extension property if and only if $\K$ has the $\delta$-chain extension property for every limit $\delta < \mu$.
\end{remark}
\begin{remark}
  $\mu$-CAECs have the chain extension property (recall the item (\ref{caec-ex}) from the list of examples). Moreover, any $\mu$-AEC naturally derived from\footnote{This can be made precise using the notion of a \emph{skeleton}, see \cite[Definition 5.3]{indep-aec-v5}.} an AEC (such as the class of $\mu$-saturated models of an AEC) will have the chain extension property.
\end{remark}

We adapt Shelah's \cite[Theorem IV.1.12.(1)]{shelahaecbook} to $\mu$-AECs:

\begin{theorem}\label{inflog-thm}
  Let $\K$ be a $\mu$-AEC. Let $\lambda \ge \LS (\K)$ be such that $\lambda = \lambda^{<\mu}$ and $\K_\lambda$ has the $\delta$-chain extension property for all limit $\delta < \lambda^+$. Assume $\K$ is categorical in $\lambda$. Let $M, N \in \K_{\ge \lambda}$. If $M \lea N$, then $M \lee_{L_{\infty, \mu}} N$. 
\end{theorem}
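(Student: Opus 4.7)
The plan is to exhibit, for every tuple $\bar a \in M^{<\mu}$, a $(<\mu)$-back-and-forth family of $\K$-isomorphisms witnessing $(M, \bar a) \equiv_{L_{\infty,\mu}} (N, \bar a)$, from which $M \lee_{L_{\infty,\mu}} N$ follows by a Karp-style characterization. Concretely, let
$$\mathcal{F} := \{h : M_0 \to N_0 \mid h \text{ a } \K\text{-iso},\ M_0 \lea M,\ N_0 \lea N,\ M_0, N_0 \in \K_\lambda\}.$$
For the base case, given $\bar a \in M^{<\mu}$ I would apply the L\"owenheim-Skolem-Tarski axiom (using $\scard{\bar a}^{<\mu} \le \lambda^{<\mu} = \lambda$) to obtain $M_0 \lea M$ with $\bar a \subseteq M_0$ and $\scard{M_0} \le \lambda$. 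Transitivity of $\lea$ with $M \lea N$ yields $M_0 \lea N$, so $\id_{M_0}$ lies in $\mathcal{F}$ and restricts to the identity on $\bar a$.

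The heart of the argument is the back-and-forth extension: given $h : M_0 \to N_0$ in $\mathcal{F}$ and $A \subseteq M$ with $\scard{A} < \mu$ (the symmetric case with $B \subseteq N$ is analogous), produce $h' : M_0' \to N_0'$ in $\mathcal{F}$ extending $h$ with $A \subseteq M_0'$. The plan is a zig-zag construction of length some regular $\delta < \lambda^+$ with $\cf(\delta) \ge \mu$: alternately enlarge $(M_i, N_i)$ inside $M$ and $N$ by a $\K_\lambda$-piece via LST, and at each successor use categoricity in $\lambda$ (so that the unique $\lambda$-sized model provides a $\lea$-universal template up to isomorphism) to transport the current iso onto the new piece, producing $h_i \subseteq h_{i+1}$. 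At limit stages, the $\delta$-chain extension property of $\K_\lambda$ provides a $\K_\lambda$-bound, and the Tarski-Vaught axioms ensure that union remains a strong submodel of both $M$ and $N$. The limit $h' := \bigcup_i h_i$ then lies in $\mathcal{F}$ and extends $h$. A standard induction on the complexity of $L_{\infty,\mu}$-formulas then converts the resulting back-and-forth family into the conclusion $M \lee_{L_{\infty,\mu}} N$.

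The main obstacle is the successor step of the zig-zag. Categoricity in $\lambda$ only produces abstract isomorphisms between $\K_\lambda$-models, not extensions of prescribed isomorphisms, so this step amounts essentially to an amalgamation-in-$\K_\lambda$ claim: an existing iso $h : M_0 \cong N_0$ with $N_0 \lea N$ must be extended along any $M_0 \lea M_0' \lea M$ in $\K_\lambda$ to a $\K$-iso onto some $N_0 \lea N_0' \lea N$. Threading categoricity through this (to realize matching extensions up to isomorphism \emph{inside} $N$) while relying on chain extension to pass through limits is the delicate technical core, and careful bookkeeping is needed so that the zig-zag simultaneously exhausts $A$ and any prescribed $B \subseteq N$ while keeping all intermediate $(M_i, N_i)$ strong submodels of $M$ and $N$ in $\K_\lambda$.
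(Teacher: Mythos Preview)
Your back-and-forth plan has a genuine gap at exactly the point you flag as ``the delicate technical core.'' Given $h:M_0\cong N_0$ in $\mathcal{F}$ and $M_0\lea M_0'\lea M$ in $\K_\lambda$, you need $N_0\lea N_0'\lea N$ together with an extension $h':M_0'\cong N_0'$ of $h$. Categoricity in $\lambda$ tells you only that $M_0'$ is abstractly isomorphic to any model of size $\lambda$; it gives no control over the isomorphism on the prescribed substructure $M_0$, and no reason why the required $N_0'$ should sit inside $N$ at all. What you are asking for is precisely a form of amalgamation (or model-homogeneity) in $\K_\lambda$, and this does \emph{not} follow from the hypotheses of the theorem---indeed, the paper's later Theorem~\ref{ap-from-categ} derives $\lambda$-amalgamation only under substantial extra assumptions (categoricity information at $\lambda^+$, GCH-type cardinal arithmetic). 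So the zig-zag as described cannot be carried out, and the obstacle is not a matter of bookkeeping.

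The paper's proof sidesteps this entirely by never attempting to stay inside $N$. It works directly by induction on formulas; the only nontrivial case is pulling a witness for $\exists\bx\,\psi(\bx,\ba)$ from $N$ back to $M$ when both lie in $\K_\lambda$. For this it builds a single increasing chain $\langle M_i:i<\lambda^+\rangle$ in $\K_\lambda$ with $M_0=M$, where at each successor one uses categoricity to choose $f_i:M\cong M_i$ and then \emph{freely} extends $f_i$ along $M\lea N$ to $g_i:N\cong M_{i+1}$ (so $M_{i+1}$ is simply a fresh copy of $N$, not forced to live inside anything). Continuity holds at points of cofinality $\ge\mu$ and the chain extension property handles the remaining limits. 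The ``back'' direction is then obtained combinatorially: the map $i\mapsto$ (least $\alpha$ with $f_i(\ba)\subseteq M_\alpha$) is regressive on the stationary set of $i$ with $\cf i\ge\mu$, so Fodor plus the pigeonhole bound $\lambda^{<\mu}=\lambda$ yields $i<j$ with $f_i(\ba)=f_j(\ba)$. One then transports the witness $\bb$ through $g_i$ into $M_{i+1}\lea M_j$, applies the induction hypothesis for $\psi$, and pulls back along $f_j^{-1}$ to $M$. The passage to $M,N\in\K_{\ge\lambda}$ is handled afterwards by a $\mu$-directed system of $\K_\lambda$-pieces. The key idea you are missing is precisely this Shelah-style chain-and-Fodor argument, which replaces the unavailable amalgamation with a counting trick over a chain of length $\lambda^+$.
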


Notice that the cardinal arithmetic ($\lambda^{<\mu} = \lambda$) is a crucial simplifying assumption in the AEC version that Shelah later worked to remove (see \cite[Section IV.2]{shelahaecbook} and \cite{bvcatinflog}).  It appears naturally here in the context of a $\mu$-AEC, but note that the chain extension might guarantee the existence of models of intermediate sizes (i.e.\ in $\chi < \chi^{<\mu}$).  %Also, when strengthening this result to $\mu' > \mu$, the sharp less than plays an important role.

%\begin{cor}
%Let $\K$ be a $\mu$-AEC.  Let $\lambda \geq LS(\K)$ be such that $\lambda = \lambda^{<\mu'}$ with $\mu \slesseq \mu'$ and $\K_\lambda$ has the $\delta$-chain extension property for all limit $\delta < \lambda^+$.  Assume $\K$ is categorical in $\lambda$.  Let $M, N \in \K_{\geq \lambda}$.  If $M \lea N$, then $M \lee_{L_{\infty, \mu'}} N$.
%\end{cor}

%\begin{proof}
%Since $\mu \slesseq \mu'$, $\K_{\geq \mu'}$ is a $\mu'$-AEC with $\LS(\K_{\geq \mu'}) = \LS(\K)^{<\mu'}$\footnotei{ML: Ji\v r\'i and I haven't been able to follow this argument.  What does the sharp inequality tell us about $\K_{\geq \mu'}$? SV: Also, why not assume only $\mu \le \mu'$ and just apply the result to $\K_{\ge \lambda}$? This is a $\mu'$-AEC just because $\lambda = \lambda^{<\mu'}$...} that satisfies the hypothesis of Theorem \ref{inflog-thm}.  Then apply that theorem.
%\end{proof}

\begin{proof}[Proof of Theorem \ref{inflog-thm}]
  We first assume that $M, N \in \K_\lambda$ and $M \lea N$. Let $\phi (\by)$ be an $L_{\infty, \mu}$-formula with $\ell (\by) = \alpha < \mu$ and let $\ba \in \fct{\alpha}{|M|}$. We show that $M \models \phi[\ba]$ if and only if $N \models \phi[\ba]$ by induction on the complexity of $\phi$. If $\phi$ is atomic, this holds because $M \subseteq N$. If $\phi$ is a boolean combination of formulas of lower complexity, this is easy to check too. So assume that $\phi (\by) = \exists \bx \psi (\bx, \by)$. If $M \models \phi[\ba]$, then using induction we directly get that $N \models \phi[\ba]$. Now assume $N \models \phi[\ba]$, and let $\bb \in \fct{<\mu}{|N|}$ be such that $N \models \psi[\bb, \ba]$. 

  We build an increasing chain $\seq{M_i : i < \lambda^+}$ and $\seq{f_i, g_i : i < \lambda^+}$ such that for all $i < \lambda^+$:

  \begin{enumerate}
    \item $M_i \in \K_\lambda$
    \item If $\cf{i} \ge \mu$, then $M_i = \bigcup_{j < i} M_j$.
    \item $f_i : M \cong M_i$.
    \item $g_i : N \cong M_{i + 1}$.
    \item $f_i \subseteq g_i$.
  \end{enumerate}

  \paragraph{\underline{This is possible}}
  If $i = 0$, let $M_0 := M$. For any $i$, given $M_i$, use categoricity to pick $f_i : M \cong M_i$ and extend it to $g_i : N \cong M_{i + 1}$. If $i$ is limit and $\cf{i} \ge \mu$, take unions. If $\cf{i} < \mu$, use the chain extension property to find $M_i \in \K_\lambda$ such that $M_j \lea M_i'$ for all $j < i$. 
  \paragraph{\underline{This is enough}}
  For each $i < \lambda^+$, let $\alpha (i)$ be the least $\alpha < \lambda^+$ such that $\ran(f_i (\ba)) \subseteq |M_{\alpha}|$. Let $S := \{i < \lambda^+ \mid \cf{i} \ge \mu\}$. Note that $S$ is a stationary subset of $\lambda^+$ and the map $i \mapsto \alpha (i)$ is regressive on $S$. By Fodor's lemma, there exists a stationary $S_0 \subseteq S$ and $\alpha_0 < \lambda^+$ such that for any $i \in S_0$, $\alpha (i) = \alpha_0$, i.e.\ $\ran (f (\ba_i)) \subseteq |M_{\alpha_0}|$. Now $\scard{\fct{<\mu}{|M_{\alpha_0}|}} = \lambda^{<\mu} = \lambda$ and $|S_0| = \lambda^+$ so by the pigeonhole principle there exists $i < j$ in $S_0$ such that $f_i (\ba) = f_j (\ba)$. Now, since $N \models \psi[\bb, \ba]$, we must have $M_{i + 1} \models \psi[g_i (\bb), g_i (\ba)]$. By the induction hypothesis, $M_j \models \psi[g_i (\bb), g_i (\ba)]$. Thus $M_j \models \phi[g_i (\ba)]$. Since $f_i \subseteq g_i$, $g_i (\ba) = f_i (\ba)$ so $M_j \models \phi[f_i (\ba)]$. Since $f_i (\ba) = f_j (\ba)$, we have that $M_j \models \phi[f_j (\ba)]$. Applying $f_j^{-1}$ to this equation, we obtain $M \models \phi[\ba]$, as desired.

  This proves the result in case $M, N \in \K_\lambda$. If $M, N \in \K_{\ge \lambda}$ and $M \lea N$, then, as before, we can find a $\mu$-directed system $\seq{N_\ba \in \K_\lambda : \ba \in {}^{<\mu} N}$ with colimit $N$ such that $\ba \in |N_\ba|$ and, if $\ba \in {}^{<\mu} M$, then $N_\ba \lea M$.
  
  As before we prove by induction on $\phi \in L_{\infty,\mu}$ that $M \models \phi[\ba]$ if and only if $N \models \phi[\ba]$. The interesting case is when $\phi = \exists \bx \psi (\bx, \by)$ and the left to right direction is straightforward, so assume $N \models \phi[\ba]$, i. e.,\ there exists $\bb \in \fct{<\mu}{|N|}$ such that $N \models \psi[\bb, \ba]$. By the previous part, $N_\ba \lee_{L_{\infty, \mu}} N_{\ba\bb}$.  So there is $\bb' \in N_\ba$ such that $N_\ba \vDash \psi[\bb', \ba]$.  Since $N_\ba \lea M$, by induction, we have $M \vDash \phi[\ba]$.
\end{proof}

Another result that can be adapted is Shelah's famous combinatorial argument that amalgamation follows from categoricity in two successive cardinals \cite[Theorem 3.5]{Sh88}. We start with some simple definitions and lemmas:

\begin{defin}
  Let $\mu \le \lambda$ be regular cardinals. $C \subseteq \lambda$ is a \emph{$\mu$-club} if it is unbounded and whenever $\seq{\alpha_i : i < \delta}$ is increasing in $C$ with $\mu \le \cf{\delta} < \lambda$, then $\sup_{i < \delta} \alpha_i \in C$.
\end{defin}
\begin{remark}
  So $\aleph_0$-club is the usual notion of club.
\end{remark}

\begin{lemma}\label{club-reflection}
  Let $\mu$ be a regular cardinal. Assume $\K$ is a $\mu$-AEC and $\lambda \ge \LS (\K)$. Let $\seq{M_i^\ell : i < \lambda^+}$, $\ell = 1,2$, be increasing in $\K_\lambda$ such that for all $i < \lambda^+$ with $\cf{i} \ge \mu$, $M_i^\ell = \bigcup_{j < i} M_j^\ell$. 

  If $f: \bigcup_{i < \lambda^+} M_i^1 \cong \bigcup_{i < \lambda^+} M_i^2$, then the set $\{i < \lambda^+ \mid f \rest M_i^1: M_i^1 \cong M_i^2\}$ is a $\mu$-club.
\end{lemma}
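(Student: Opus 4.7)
The plan is to verify directly that the set $C$ satisfies the two defining conditions of a $\mu$-club in $\lambda^+$: unboundedness, and closure under suprema of increasing sequences whose cofinality lies in the interval $[\mu, \lambda^+)$. Throughout, write $M^\ell := \bigcup_{i < \lambda^+} M_i^\ell$, so that by hypothesis $f : M^1 \cong M^2$.

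For unboundedness, given $\alpha_0 < \lambda^+$, I would build a strictly increasing continuous sequence $\seq{\alpha_j : j \le \mu}$ in $\lambda^+$ via a back-and-forth of length $\mu$. At a successor stage, pick $\alpha_{j+1} > \alpha_j$ large enough that $f(|M_{\alpha_j}^1|) \subseteq |M_{\alpha_{j+1}}^2|$ and $f^{-1}(|M_{\alpha_j}^2|) \subseteq |M_{\alpha_{j+1}}^1|$; such an ordinal exists below $\lambda^+$ because each $M_{\alpha_j}^\ell$ has size $\lambda$, the chains are cofinal in $M^\ell$, and $\lambda^+$ is regular. At limit stages below $\mu$, take suprema of the indices. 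Since $\mu \le \LS(\K) \le \lambda < \lambda^+$, the construction stays inside $\lambda^+$. Let $\alpha^* := \alpha_\mu$. Strict increase at successors together with regularity of $\mu$ gives $\cf{\alpha^*} = \mu$, so the continuity hypothesis on the original chains yields $M_{\alpha^*}^\ell = \bigcup_{j < \mu} M_{\alpha_j}^\ell$. The back-and-forth then forces $f(|M_{\alpha^*}^1|) = |M_{\alpha^*}^2|$, and the restriction of the $L$-isomorphism $f$ to an $L$-substructure of its domain is again an $L$-isomorphism. By the isomorphism clause in Definition \ref{defmac}(3), $f \rest M_{\alpha^*}^1$ is then a $\K$-isomorphism, so $\alpha^* \in C \cap (\alpha_0, \lambda^+)$.

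For $\mu$-closure, given an increasing $\seq{\alpha_i : i < \delta}$ in $C$ with $\mu \le \cf{\delta} < \lambda^+$ and $\alpha^* := \sup_{i < \delta} \alpha_i$, I note that $\cf{\alpha^*} = \cf{\delta} \ge \mu$, so the continuity hypothesis again gives $M_{\alpha^*}^\ell = \bigcup_{i < \delta} M_{\alpha_i}^\ell$. The maps $f \rest M_{\alpha_i}^1$ cohere (all being restrictions of the same $f$) and each is a $\K$-isomorphism onto $M_{\alpha_i}^2$, so their union $f \rest M_{\alpha^*}^1$ is an $L$-isomorphism from $M_{\alpha^*}^1$ onto $M_{\alpha^*}^2$, hence a $\K$-isomorphism. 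The only step demanding real thought, rather than routine bookkeeping, is the decision to run the back-and-forth for $\mu$ rather than $\omega$ steps: in the classical AEC setting continuity holds at every limit and $\omega$ suffices, but here continuity is guaranteed only at ordinals of cofinality at least $\mu$, so a shorter chain would not allow us to identify $M_{\alpha^*}^\ell$ with $\bigcup_j M_{\alpha_j}^\ell$, and the surjectivity of $f \rest M_{\alpha^*}^1$ onto $M_{\alpha^*}^2$ would not follow.
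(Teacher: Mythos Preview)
Your proof is correct and follows essentially the same approach as the paper: both use a back-and-forth (catch-up) argument of length $\mu$ to produce an ordinal of cofinality exactly $\mu$ at which the continuity hypothesis applies, and both observe that $\mu$-closure is immediate from continuity at points of cofinality $\ge\mu$. The only cosmetic difference is that the paper packages the argument as $C = C_0 \cap C_1$ with $C_0, C_1$ the one-directional ``$f$-forward'' and ``$f$-backward'' $\mu$-clubs, whereas you verify unboundedness and $\mu$-closure of $C$ directly.
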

\begin{proof}
  Let $C := \{i < \lambda^+ \mid f \rest M_i^1: M_i^1 \cong M_i^2\}$. By cardinality considerations, for each $i < \lambda^+$, there is $j_i < \lambda^+$ such that $|f(M_i^1)| \subseteq |M_{j_i}^2|$ (by coherence this implies $f(M_i^1) \lea M_{j_i}^2$). Let $\delta$ have cofinality $\mu$ such that for all $i < \delta$, $j_i < \delta$. Then by continuity $f(M_\delta^1) \lea M_{\delta}^2$. Let $C_0$ be the set of all such $\delta$. It is easy to check that $C_0$ is a $\mu$-club. Similarly, let $C_1$ be the set of all $\delta$ such that $f^{-1} (M_\delta^2) \lea M_\delta^1$. $C_1$ is also a $\mu$-club and it is easy to check that $C = C_0 \cap C_1$, and the intersection of two $\mu$-clubs is a $\mu$-club, so the result follows.
\end{proof}

\begin{theorem}\label{ap-from-categ}
  Let $\mu$ be a regular cardinal. Assume $\K$ is a $\mu$-AEC, $\lambda = \lambda^{<\mu} \ge \LS (\K)$, $I(\lambda,\K) = 1\leq I (\lambda^+,\K) < 2^{\lambda^+}$. If:

  \begin{enumerate}
    \item $\K_\lambda$ has the extension property for $\delta$-chains (see above) for every $\delta < \lambda^+$.
    \item $\lambda = \lambda^\mu$ and $2^{\lambda} = \lambda^+$.

  \end{enumerate}

  Then $\K$ has $\lambda$-amalgamation.
\end{theorem}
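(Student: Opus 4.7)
The strategy is to adapt Shelah's classical non-structure argument from \cite[Theorem 3.5]{Sh88}: assuming failure of $\lambda$-amalgamation, I would construct $2^{\lambda^+}$ pairwise non-isomorphic models of size $\lambda^+$, contradicting $I(\lambda^+, \K) < 2^{\lambda^+}$.

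Assume for contradiction that $\K$ has no $\lambda$-amalgamation and fix a witnessing triple $M^\ast, N_0, N_1 \in \K_\lambda$ with $M^\ast \lea N_\ell$ for $\ell = 0, 1$. By induction on $\ell(\eta) < \lambda^+$ I would build a tree $\langle M_\eta : \eta \in 2^{<\lambda^+}\rangle \subseteq \K_\lambda$ satisfying $M_\nu \lea M_\eta$ whenever $\nu \trianglelefteq \eta$, and such that at every node $\eta$ the pair $(M_\eta, M_{\eta \frown 0}, M_{\eta \frown 1})$ is isomorphic to $(M^\ast, N_0, N_1)$, forcing non-amalgamation at that node. At limit levels $\alpha$ with $\cf \alpha \ge \mu$ I take the union, which remains in $\K_\lambda$ by the Tarski--Vaught axiom together with the arithmetic $\lambda = \lambda^{<\mu}$. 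At limit levels with $\cf \alpha < \mu$ I invoke the $\delta$-chain extension hypothesis to find a bound in $\K_\lambda$, then use categoricity in $\lambda$ to fix its isomorphism type. Simultaneously, a weak-diamond bookkeeping on $\lambda^+$ --- available from $2^\lambda < 2^{\lambda^+}$, which follows from the arithmetic assumption $2^\lambda = \lambda^+$ --- anticipates potential isomorphisms $f : M_\eta \cong M_\nu$ between branch unions, allowing one to choose the copies of $(M^\ast, N_0, N_1)$ at each node compatibly with the predicted $f$.

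For each branch $\eta \in 2^{\lambda^+}$, form $M_\eta := \bigcup\{M_{\eta \rest \alpha} : \alpha < \lambda^+,\ \cf \alpha \ge \mu\} \in \K_{\lambda^+}$, a $\mu$-directed union. If fewer than $2^{\lambda^+}$ isomorphism classes appeared among these, then by pigeonhole and the bookkeeping there would exist $\eta \ne \nu$ in $2^{\lambda^+}$ and an isomorphism $f: M_\eta \cong M_\nu$ anticipated by the construction. By Lemma \ref{club-reflection}, the set $\{\alpha < \lambda^+ : f \rest M_{\eta \rest \alpha}: M_{\eta \rest \alpha} \cong M_{\nu \rest \alpha}\}$ is a $\mu$-club. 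Let $\alpha_0$ be the least level at which $\eta, \nu$ differ --- say $\eta(\alpha_0) = 0$, $\nu(\alpha_0) = 1$ --- and pick $\alpha > \alpha_0$ in this $\mu$-club. Then the inclusion of $M_{(\nu \rest \alpha_0) \frown 1}$ into $M_{\nu \rest \alpha}$, together with the $f$-image of $M_{(\eta \rest \alpha_0) \frown 0}$ inside $M_{\nu \rest \alpha}$, yields an amalgam over $M_{\eta \rest \alpha_0} = M_{\nu \rest \alpha_0}$ of the bad pair --- provided the automorphism $f \rest M_{\eta \rest \alpha_0}$ is the one anticipated and absorbed at step $\alpha_0$. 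The cardinal arithmetic $\lambda = \lambda^\mu$ is used here to count the possible relevant isomorphisms at intermediate levels and to justify the pigeonhole step before invoking $\mu$-club reflection.

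The main obstacle lies in coordinating the tree construction at limit levels of cofinality below $\mu$ --- where the $\delta$-chain extension hypothesis replaces the automatic closure under directed colimits available in ordinary AECs --- with the weak-diamond bookkeeping needed to arrange the non-amalgamation witnesses so that they catch every possible isomorphism between branches. Each of the ingredients (weak diamond from $2^\lambda < 2^{\lambda^+}$, $\delta$-chain extension, categoricity in $\lambda$, and the arithmetic identities $\lambda = \lambda^{<\mu}$ and $\lambda = \lambda^\mu$) is furnished directly by the hypotheses; the translation of the final ``amalgam from isomorphism'' step via Lemma \ref{club-reflection} is then essentially formal once the tree is in place.
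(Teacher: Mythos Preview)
Your overall architecture---build a binary tree in $\K_\lambda$, take branch unions, and use Lemma \ref{club-reflection} to reflect a putative isomorphism---matches the paper. But several points in your execution diverge from the paper's proof in ways that matter.

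\textbf{Diamond, not weak diamond.} The hypotheses $\lambda=\lambda^{\mu}$ and $2^{\lambda}=\lambda^{+}$ are there precisely so that Gregory's theorem yields the \emph{full} diamond $\Diamond_{E_\mu}$ on $E_\mu=\{\alpha<\lambda^{+}:\cf\alpha\ge\mu\}$. The paper uses this full diamond: at each $\delta\in E_\mu$ one has a guessed triple $(\eta_\delta,\nu_\delta,g_\delta)$, and the tree is built so that whenever $g_\delta:M_{\eta_\delta}\cong M_{\nu_\delta}$ really is an isomorphism, the successors are chosen to block any extension of $g_\delta$. The payoff is that \emph{every} pair of distinct branches gives non-isomorphic models, directly yielding $2^{\lambda^{+}}$ models. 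Your appeal to weak diamond from $2^{\lambda}<2^{\lambda^{+}}$ is a genuinely different and harder route; the paper's own remark after the theorem explicitly says it is ``not clear'' that the result follows from $2^{\lambda}<2^{\lambda^{+}}$ alone. Your ``pigeonhole plus bookkeeping anticipates $f$'' step is not how weak diamond arguments run, and it does not substitute for the stationary hit that full diamond provides.

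\textbf{The guess level versus the split level.} In your contradiction paragraph you pick $\alpha_0$ as the first point where $\eta,\nu$ differ, then choose $\alpha>\alpha_0$ in the $\mu$-club, and then require that $f\rest M_{\eta\rest\alpha_0}$ was ``anticipated at step $\alpha_0$''. This conflates two distinct ordinals. In the paper the relevant level is a $\delta$ lying in the \emph{intersection} of the diamond's stationary set (inside $E_\mu$) and the $\mu$-club from Lemma \ref{club-reflection}; at that $\delta$ one has $\eta_\delta=\eta\rest\delta$, $\nu_\delta=\nu\rest\delta$, $g_\delta=f\rest\delta$, and the blocked extension is $M_{\eta_\delta\frown\eta(\delta)}$, not $M_{(\eta\rest\alpha_0)\frown 0}$. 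That the guessing is restricted to $E_\mu$ is the $\mu$-AEC-specific point: it is exactly what lets the stationary set meet the $\mu$-club.

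\textbf{A single bad triple is not enough.} You fix one triple $(M^{\ast},N_0,N_1)$ and place an isomorphic copy at every node. But non-amalgamability of $(M^{\ast},N_0,N_1)$ over the identity of $M^{\ast}$ does not prevent the guessed isomorphism $g_\delta$ (which, via categoricity, corresponds to an arbitrary automorphism of $M^{\ast}$) from extending. The paper first derives the uniform statement $(\ast)$: for \emph{every} isomorphism $f:M_1\cong M_2$ in $\K_\lambda$ there exist extensions $M_1',M_2'$ that cannot be amalgamated over $f$. This is what is applied at each diamond-guess level $\delta$ to the specific $g_\delta$. Your later phrase ``choose the copies compatibly with the predicted $f$'' gestures at this, but it is inconsistent with the earlier claim that the same fixed triple is used everywhere.
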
 
\begin{proof}
  Assume not. By failure of amalgamation and some renaming, we have:

  \begin{itemize}
    \item[$(\ast)$] If $M_1, M_2 \in \K_\lambda$ and $f: M_1 \cong M_2$, there are $M_l' \in \K_\lambda$, $\ell = 1,2$, with $M_l \lea M_l'$, $\scard{|M_l'| - |M_l|} = \lambda$, such that there is no $N \in \K_\lambda$ and $g_l: M_l' \rightarrow N$ commuting with $f$.
  \end{itemize}

  In particular (taking $M_1 = M_2$ and $f$ the identity function), the model of size $\lambda$ is not maximal. By Gregory's theorem (see \cite[Theorem 23.2]{jechbook}), the combinatorial principle $\Diamond_{E_\mu}$ holds, where $E_\mu := \{i < \lambda^+ \mid \cf{i} \ge \mu\}$. With some coding, one can see that $\Diamond_{E_\mu}$ is equivalent to: 

  \begin{itemize}
    \item[$(\ast \ast)$] There are $\{\eta_\alpha, \nu_\alpha: \alpha \rightarrow \mid \alpha < \lambda^+\}, \{g_\alpha: \alpha \rightarrow \alpha \mid \alpha < \lambda^+\}$ such that for all $\eta, \nu: \lambda^+ \rightarrow 2$, $g: \lambda^+ \rightarrow \lambda^+$, the set $\{\alpha \in E_\mu \mid \eta_\alpha = \eta \upharpoonright \alpha, \nu_\alpha = \nu \upharpoonright \alpha, g_\alpha = g \restriction \alpha\}$ is stationary.
  \end{itemize}

  We build a strictly increasing tree $\{M_\eta \mid \eta \in \fct{\le \lambda^+}{2}\}$ such that:

\begin{enumerate}
  \item\label{prop-1} $|M_\eta| \subseteq \lambda^+$, $\mcard{M_\eta} = \lambda$, $\ell (\eta) \in |M_{\eta \smallfrown \ell}|$ for all $\eta \in \fct{<\lambda^+}{2}$ and $\ell < 2$.
  \item If $\eta \in \fct{\le\lambda^+}{2}$ and $\cf{\ell (\eta)} \ge \mu$, then $M_\eta = \bigcup_{j < \ell (\eta)} M_{\eta \rest j}$.
  \item\label{prop-3} If $|M_{\eta_\delta}| = \delta$, $\eta_\delta \neq \nu_\delta$, and $g_\delta : M_{\eta_\delta} \cong M_{\nu_\delta}$ is an isomorphism, for any $\ell, \ell' < 2$ and any $\nu \supseteq \nu_\delta \smallfrown \ell'$, $g_\delta$ \emph{cannot} be extended to an embedding of $M_{\eta_\delta \smallfrown \ell}$ into $M_{\nu}$.
\end{enumerate}

\paragraph{\underline{This is enough}}

We claim that for any $\eta \neq \nu \in \fct{\lambda^+}{2}$, $M_\eta \not \cong M_\nu$. Indeed, assume $f: M_\eta \rightarrow M_\nu$ is an isomorphism. For $i < \lambda^+$, let $f_i := f \rest M_{\eta \rest i}$ and let $C := \{i < \lambda^+ \mid f_i : M_{\eta \rest i} \cong M_{\nu \rest i}\}$. By Lemma \ref{club-reflection}, $C$ is a $\mu$-club. Also $\{i < \lambda^+ \mid |M_{\eta \rest i}| = i\}$ is a club so without loss of generality is contained in $C$. Now the stationary set described by $(\ast \ast)$ intersects $C$ in unboundedly many places (as it only has points of cofinality $\mu$), hence there is $\delta < \lambda^+$ such that $\eta \upharpoonright \delta \neq \nu \upharpoonright \delta$, $\eta_\delta = \eta \upharpoonright \delta, \nu_\delta = \nu \upharpoonright \delta$, $g_\delta = f \upharpoonright \delta$, $\delta = |M_{\eta_\delta}| = |M_{\nu_\delta}|$, and $g_\delta: M_{\eta_\delta} \cong M_{\nu_\delta}$. But $f$ extends $g_\delta$ and restricts to an embedding of $M_{\eta \smallfrown \eta (\delta)}$ into $M_{\nu \restriction \gamma}$, for some $\gamma < \lambda^+$ with $\gamma > \delta$ sufficiently large. This contradicts (\ref{prop-3}).

\paragraph{\underline{This is possible}}

Take any $M_{<>} \in \K$ with $|M_{\seq{}}| = \lambda$ for the base case, take unions at limits of cofinality at least $\mu$, and use the extension property for chains (and some renaming) at limits of cofinality less than $\mu$.

Now if one wants to define $M_{\eta \smallfrown l}$ for $\eta \in \fct{\delta}{2}$ (assuming by induction that $M_\nu$ for all $\nu \in \fct{\le\delta}{2}$ have been defined) take any two strict extensions, unless $|M_\eta| = \delta$, $\eta_\delta \neq \nu_\delta$, $g_\delta: M_{\eta_\delta} \cong M_{\nu_\delta}$ is an isomorphism, and either $\eta = \eta_\delta$, or $\eta = \nu_\delta$. We show what to do when $\eta = \eta_\delta$. The other case is symmetric. Let $M_{\eta_\delta}'$, $M_{\nu_\delta}'$ be as described by $(\ast)$ and let $M_{\eta_\delta \smallfrown l}, M_{\nu_\delta \smallfrown l}$ be their appropriate renaming to satisfy (\ref{prop-1}). Now $(\ast)$ tells us that (\ref{prop-3}) is satisfied.
\end{proof}
\begin{remark}
  Of course, the set-theoretic hypotheses of Theorem \ref{ap-from-categ} can be weakened. For example, it is enough to require $\lambda = \lambda^{<\mu}$ and $\Diamond_{S_\mu}$ or even (by Shelah's more complicated proof) a suitable instance of the weak diamond.  It is not clear, however, that it follows from just $2^{\lambda} < 2^{\lambda^+}$.
\end{remark}

\end{document}